\documentclass[final,3p,times]{elsarticle}


\usepackage{amssymb,amsmath,amsfonts,mathtools}
\usepackage{amsthm}

\newtheorem*{lemma}{Lemma}

\usepackage{graphicx}
\usepackage{graphbox}
\usepackage{algorithm}
\usepackage{algpseudocode}
\usepackage{subcaption}
\usepackage{tabularx,booktabs}
\newcolumntype{R}{>{\raggedright\arraybackslash}X}
\newcolumntype{L}{>{\raggedleft\arraybackslash}X}
\newcolumntype{Y}{>{\centering\arraybackslash}X}

\usepackage{multirow}
\usepackage{systeme}
\biboptions{sort&compress}
\bibliographystyle{elsarticle-num}

\usepackage{lineno}
\modulolinenumbers[5]
\usepackage[]{setspace}


\usepackage{listings}
\usepackage{color} 
\definecolor{mygreen}{RGB}{28,172,0} 
\definecolor{mylilas}{RGB}{170,55,241}






\journal{Computer Methods in Applied Mechanics and Engineering}




\begin{document}

\begin{frontmatter}

\title{Smooth Implicit Hybrid Upwinding \\ for Compositional Multiphase Flow in Porous Media}


\author[St]{Sebastian BM Bosma\corref{cor1}}
\ead{sbosma@stanford.edu}

\author[Total]{Fran\c cois P Hamon}
\ead{francois.hamon@totalenergies.com}

\author[Chevron]{Brad T Mallison}
\ead{btmb@chevron.com}

\author[St]{Hamdi A Tchelepi}
\ead{tchelepi@stanford.edu}

\cortext[cor1]{Corresponding authors}

\address[St]{Department of Energy Resources Engineering, Stanford University, Stanford, CA, USA}
\address[Total]{TotalEnergies E\&P Research and Technology, Houston, TX 77002, USA}
\address[Chevron]{Chevron Technical Center, Houston, TX 77002, USA}

\begin{abstract}
In subsurface multiphase flow simulations, poor nonlinear solver performance is a significant runtime sink. The system of fully implicit mass balance equations is highly nonlinear and often difficult to solve for the nonlinear solver, generally Newton(-Raphson). Although the physical problem is inherently nonlinear, discretization schemes introduce several strong nonlinearities that can cause Newton iterations to diverge or converge very slowly. This frequently results in time step cuts, leading to wasted iterations and computationally expensive simulations. Much literature has looked into how to improve the nonlinear solver through enhancements or safeguarding updates. In this work, we take a different approach; we aim to improve convergence with a smoother finite volume discretization scheme which is more suitable for the Newton solver.

Building on recent work, we propose a novel total velocity hybrid upwinding scheme with weighted average flow mobilities (WA-HU TV) that is unconditionally monotone and extends to compositional multiphase simulations. Analyzing the solution space of a one-cell problem, we demonstrate the improved properties of the scheme and explain how it leverages the advantages of both phase potential upwinding and arithmetic averaging. This results in a flow subproblem that is smooth with respect to changes in the sign of phase fluxes, and is well-behaved when phase velocities are large or when co-current viscous forces dominate. Additionally, we propose a WA-HU scheme with a total mass (WA-HU TM) formulation that includes phase densities in the weighted averaging. 

The proposed WA-HU TV consistently outperforms existing schemes, yielding benefits from 5\% to over 50\% reduction in nonlinear iterations. The WA-HU TM scheme also shows promising results; in some cases leading to even more efficiency. However, WA-HU TM can occasionally also lead to convergence issues. Overall, based on the current results, we recommend the adoption of the WA-HU TV scheme as it is highly efficient and robust.

\end{abstract}

\begin{keyword}
Hybrid Upwinding \sep
Efficient Nonlinear Solver \sep
Coupled Flow and Transport \sep
Geologic Carbon Sequestration \sep
Reservoir Simulation \sep
Implicit Discretization Scheme
\end{keyword}

\end{frontmatter}



\allowdisplaybreaks

\section{Introduction}
As the world attempts to mitigate global warming and pursue less than $2.5^{\circ}$C degree pathways \cite{IPCC18}, reducing carbon emissions has become an immediate priority. Cleaner gas-powered plants are replacing coal at a rapid rate, and numerous CCS (Carbon Capture \& Sequestration) projects are being drawn up as carbon policies are put into place. With this, the demand for efficient simulators, which can accurately predict the migration of gas through liquid-filled subsurface formations, has soared. 
Strong buoyancy and capillary forces present in these gas-liquid processes make the partial differential equations governing coupled multiphase flow and transport inherently hard to solve. In combination with the strong heterogeneities of reservoir rock properties, often spanning multiple orders of magnitude, carbon sequestration and gas production simulations present a significant challenge. As such, industry-standard simulation techniques have started to fall short of the desired efficiency to perform the numerous full-scale simulations required for project planning and risk assessments.

A significant runtime sink is poor nonlinear solver performance. The nonlinear solver, most commonly Newton's method with damping \cite{DeuflhardBook,Eclipse}, is used to solve the algebraic system of equations arising from the implicit time discretization of the governing reservoir simulation equations. The system can be described as a near elliptic subproblem for flow and a near hyperbolic subproblem for transport \cite{Trangenstein1989MathematicalStructure}, that have strong nonlinear coupling through fluid and rock properties. To determine numerical phase fluxes in the finite volume method, the industry-standard discretization scheme uses a conventional formulation with phase potential upwinding (PPU). By doing so, the coupled flow and transport problem relies on a single upwinding decision per phase based on the sign of the potential difference. Importantly, these upwinding decisions result in kinks in the solution space, i.e, discontinuities in the derivative, that can impact Newton's ability to converge. Traditional improvement techniques, such as damping, are based on heuristics and only have limited success. More recently, trust-region chopping strategies \cite{WANG2013TrustRegion,Li2015TrustRegion,Moyner2017TrustRegions,Klemetsdal2019TR} have offered a robust alternative, yet add computational cost per iteration. 

In the development of such Newton improvement strategies, it has become evident that strong nonlinearities in the numerical solution space cause slow convergence or divergence of the Newton iterations \cite{Jenny2009, WANG2013TrustRegion, Hamon16HU_Analysis, Voskov2012EoSsimulation}. Although some challenging nonlinear features are inherent to the physics -- such as inflection points of fractional flow curves -- it is desirable to avoid kinks or strong curvature induced by the numerical treatment of governing equations. Moreover, many works have shown that smoother discretizations lead to improvements in efficiency \cite{Hamon2016HU_Buoyancy,JIANG2017C1PPU, Khebzegga2020Continuous, Jiang2021SmoothCompositional}. 
To this end, a total velocity formulation with hybrid upwinding (HU) has been proposed. 
Originally developed to improve the properties of a transport problem--that is, considering a fixed total velocity--hybrid upwinding applies a separate treatment to the fluid properties appearing in the viscous, gravitational, and capillary terms of the numerical flux.
As a result, where the standard PPU scheme sees severe kinks related to changes in the single upwinding direction per phase, this hybrid approach yields several softer kinks related to the upwinding choices per force.
This smoother HU-based discretization leads to a reduction in the number of Newton iterations for pure (immiscible) transport problems as documented in previous work  \cite{LEE2015HU,Hamon16HU_Analysis}.

However, in implicit methods for coupled flow and transport--that we exclusively consider in this work--the total velocity is a space- and time-dependent function of the primary variables.
Moreover, numerical studies \cite{Hamon2016HU_Buoyancy,BRENNER2020} have shown that the discretization of the mobilities appearing in the total velocity has a strong impact on Newton convergence as these fluid properties dictate the flow problem and are key in the coupling between flow and transport.
More concretely, as the total velocity in the original HU scheme is computed using PPU, the troublesome kinks related to changes in sign of the phase potential are still present in the coupled flow and transport problem. 
To address this issue, Hamon et al. \cite{Hamon2016HU_Buoyancy} employed a total velocity formulation combining HU with a smooth weighted averaging (WA) of the mobilities used to compute the total velocity. 
Their work demonstrates that this WA-HU approach produces excellent nonlinear behavior in the context of immiscible, incompressible two-phase flow and transport with buoyancy.
This scheme was then extended to three-phase flow and transport with buoyancy and capillarity where capillary forces are treated outside of the total velocity \cite{Hamon2018HU_capillarity}. 
In both of these schemes a single weighting coefficient is applied to all phases. 
As a result, the parameters of the weighting coefficient are bounded to prevent downwinding, and the extension of the scheme to compositional fluids is non-trivial.


We propose a generalized hybrid upwinding scheme with improved weighted averaging of the mobilities used to compute the total velocity (WA-HU). The discretization is unconditionally monotone and extends to compositional multiphase simulations. (i) With a separate weighting coefficient for each phase, the weighted mobilities are designed to yield robust nonlinear behavior in strong co-current flow regimes, as well as in counter-current flow regimes where phase velocities may change sign frequently. The separate treatment per phase also removes the lower bound on the weighting coefficient and thus resolves issues mentioned in \cite{JIANG2017C1PPU} where the original WA-HU could lead to noticeable differences with respect to PPU. Secondly, (ii) we extend the original hybrid upwinding transport framework such that it is well-suited for more than two phases. These two contributions, (i) and (ii), result in a scheme with a coupling of the flow and transport which reduces the number of iterations required by the nonlinear solver while preserving the accuracy of the discretization. Furthermore, the flexibility of the proposed WA-HU also enables smooth upwinding of properties such as density. Hence, as a third contribution, (iii) we present and motivate a novel phase flux computation using a total mass flux formulation. We ultimately evaluate WA-HU's performance, both in a total velocity and total mass flux scheme, with respect to existing schemes by comparing cumulative Newton iteration counts. 

Section \ref{sec:GovEq} presents the governing equations. We then describe the numerical discretization and flux formulations in section \ref{sec:NumDisc}. Next, treatment of the numerical fluid properties is discussed in section \ref{sec:FluidPropertyTreatment}. We perform a quantitative analysis of the existing and proposed schemes in section \ref{sec:Analysis}. Finally, the schemes are tested on a suite of numerical test cases in section \ref{sec:TestCases}. Section \ref{sec:conclusion} summarizes the conclusions and offers some points of discussion.

\section{Compositional Flow and Transport through Porous Media}  \label{sec:GovEq}
We consider an incompressible porous medium filled with an isothermal reservoir fluid system with $n_c$ components and $n_p$ phases. 
To describe this system, we implement a general compositional model. This choice is made for generality; the presented work is also applicable to formulations such as the Black-Oil model. The compositional mass conservation equations are expressed in their differential form as
\begin{align}
    \phi \ \frac{\partial}{\partial t} \Bigg( \sum_\ell^{n_p} x_{c,\ell} \rho_\ell S_\ell \Bigg) + \nabla \cdot \sum_\ell^{n_p}  x_{c,\ell} \rho_\ell \boldsymbol{u}_\ell = \sum_\ell^{n_p}  x_{c,\ell} \rho_\ell \Tilde{q}^s_{\ell} 
    \quad \quad \forall \ c \in \{1,...,n_c\},
    \label{eq:DifferentialFormMassConservation}
\end{align}
where $\phi$ is the porosity, $\rho_\ell$ is the phase mass density, $x_{c,\ell}$ is the mass fraction of component $c$ in phase $\ell$, and $S_\ell$ is the phase saturation variable. Sources and sinks, commonly wells in reservoir simulation, are captured by $\Tilde{q}_\ell^s$. Here, $\Tilde{\bullet}$ designates a per-unit-volume flow quantity. The phase velocity, $\boldsymbol{u}_\ell$, is characterized by Darcy's law:
\begin{align}
    \boldsymbol{u}_\ell = - k \lambda_\ell \nabla \Phi_\ell 
    \quad \forall \ \ell \in \{1,...,n_p\}, \label{eq:Darcy}
\end{align}
where $k$ is the scalar permeability of the porous medium and $\lambda_\ell = \frac{k_{r,\ell}}{\mu_\ell}$ is the fluid mobility, equal to the relative permeability over the viscosity. We assume that $k_{r,\ell}$ is a non-decreasing function of its own phase saturation; in this work we use Brooks-Corey relationships. The phase potential gradient is defined as
\begin{align}
     \nabla \Phi_\ell = \nabla p_\ell - \rho_\ell g \nabla z
\end{align}
Here, $p_\ell$ is the phase pressure variable, $g$ is the gravitational acceleration ($9.81 \text{m/s}^2$) and $z$ is depth (defined positive downwards). In addition to the mass balance equations, we impose: the saturation constraint $\sum_\ell S_\ell = 1$, $n_p$ mass fraction constraints expressed as
\begin{align}
    \sum_c^{n_c} x_{c,\ell} = 1  \quad \forall \ \ell \in \{1,...,n_p\}, \label{eq:compFracConstraint}
\end{align}
and $n_p$ capillary pressure relationships which relate the phase pressure to a reference pressure $p$, i.e. 
\begin{align}
    p_{cap,\ell}(S_\ell) = p - p_\ell, \quad \ell \in \{1,...,n_p\}. 
\end{align}
Typically the reference pressure is chosen to be one of the phase pressures. Hence the capillary function of that phase will be equal to 0. In this work, we select the least wetting phase to be the reference pressure, except in water-oil-gas systems where we choose the intermediate oil phase. Finally, to close the system, we apply constraints to ensure equal fugacities of each component in all phases where it is present \cite{Voskov2012EoSsimulation}.

\section{Numerical Discretization of Mass Balance Equations} \label{sec:NumDisc}
To numerically simulate flow and transport in a porous subsurface formation, we discretize the domain into $N$ control volumes. In this work, a fully implicit (backward-Euler) finite volume scheme is employed to translate the differential mass balance equations, eqs. \eqref{eq:DifferentialFormMassConservation}, onto our discrete system. Fully implicit discretizations are the most common formulation used in commercial simulators and allow for a concise and clear presentation. Note that HU schemes have shown significant efficiency benefits in sequential implicit schemes as well \cite{Moncorge2020Upwinding}. Future work will investigate the benefits of the presented scheme for sequential formulations. 

Discretizing eqs.\eqref{eq:DifferentialFormMassConservation}, the coupled system of equations with the discrete mass balances for every cell $i$, at time step $n+1$, reads:
\begin{align}
    V_i \phi_i  \frac{M_{c,i}^{n+1}-M_{c,i}^{n}}{\Delta t}  + \sum_{j \in con(i)} F_{c,ij}^{n+1} =  V_i Q_{c,i}^{n+1} 
    \quad \quad \forall \ c \in \{1,...,n_c\} \quad \forall \ i \in \{1,...,N\} 
    \label{eq:DiscreteSystem}
\end{align}
where $V_i$ is the volume of cell $i$ and $con(i)$ is the set of cells connected to cell $i$. We define a component mass-per-volume
\begin{align}
    M_{c} \equiv \sum_\ell^{n_p} x_{c,\ell} \rho_\ell S_\ell . 
\end{align}
where we have omitted the superscript for the time index and the subscript for the cell index. Henceforth, we will omit these indices from our notation for simplicity whenever they can be inferred. Similarly, a source component mass flux is defined
\begin{align}
    Q_{c} \equiv \sum_\ell^{n_p} x_{c,\ell} \rho_\ell \Tilde{q}^s_{\ell}.
\end{align}
Finally, the component mass fluxes capture the mass exchange between cells, i.e. 
\begin{align} \label{eq:FcDiscrete}
    F_{c} \equiv \sum_\ell^{n_p} x_{c,\ell} \rho_\ell u_\ell.
\end{align}
Here, the interface indices, e.g. $ij$ indicating the interface between cells $i$ and $j$, are also omitted for simplicity. Note that $u_\ell$ in eq. \eqref{eq:FcDiscrete}, or $u_{\ell,ij}^{n+1}$ to be precise, is a scalar quantity that represents the integral of the phase velocity over the interface. Furthermore, $F_c = F_{c,ij}>0$ indicates a positive mass flux from cell $i$ to cell $j$. As the flux terms are computed with interface fluid properties that depend on the flow state, they introduce additional complexity. Moreover, this can lead to issues in convergence when solving the system of equations \eqref{eq:DiscreteSystem} \cite{WANG2013TrustRegion, Li2015TrustRegion}. To alleviate issues encountered with standard formulations, this work proposes a physically motivated and numerically favorable computation of the component mass fluxes. To compare schemes and motivate their use, the following subsections will first describe different flux formulations and the nonlinear solver of choice. Then section \ref{sec:FluidPropertyTreatment} will delve into the possible treatments of the fluid properties: $x_{c,\ell}$, $\rho_\ell$ and $\lambda_\ell$.

\subsection{Standard Flux Formulation}
We label the conventional approach, and current industry-standard for fully implicit simulations, as the standard flux formulation and denote its variables with $^{st}$. As such, the component mass fluxes are computed as
\begin{align} \label{eq:standardMassFlux}
    F_{c}^{st} = \sum_\ell^{n_p} x_{c,\ell}^{st} \rho_\ell^{st} u_\ell^{st},
\end{align}
where the phase velocity is computed directly using Darcy's law. Discretizing eq. \eqref{eq:Darcy} results in:
\begin{align} 
    u_\ell^{st} = T \lambda_\ell^{st} \Delta \Phi_{\ell}, \label{eq:standardPhaseVelocity}
\end{align}
where $T$ is the two-point flux approximation (TPFA) interface transmissibility which captures grid and rock properties \cite{AzizSettari}. Although we do not consider multipoint flux approximations (MPFA), the methods proposed in this work are also applicable to them. Similarly, the novelties from this work also apply to multipoint upwinding schemes\cite{Kozdon2011MultiD,Lamine2015multidimensional,Hamon2020multidimensionalHU}. Furthermore, remembering that these are all quantities of interface $ij$, we specify that the phase potential difference is computed as  
\begin{align}
    \Delta \Phi_\ell = \Delta \Phi_{\ell,ij} = \Delta p_{\ell,ij} - g_{\ell,ij},
\end{align}
with $g_{\ell,ij}$ defined as the phase gravity potential difference over interface $ij$, i.e.
\begin{align}
    g_{\ell,ij} \equiv \rho_{\ell,ij} g \Delta z_{ij}.
\end{align}
Here, difference term subscripts indicate which variable is subtracted ($\Delta \bullet_{ij} = \bullet_i-\bullet_j$). 

Lastly, the density at the interface, $\rho_{\ell,ij}$, used to compute the gravity potential is often calculated as the arithmetic average of the two neighboring cells. When a phase is no longer present in one of the cells, the value of the cell in which the phase is still present is chosen. This leads to jumps in density value which can cause problems in converging to the solution. Hence, in this work, we compute the interface density with a saturation-weighted average. That is,
\begin{align}
    \rho_{\ell,ij} = \frac{S_{\ell,i} \rho_{\ell,i} + S_{\ell,j} \rho_{\ell,j}}{S_{\ell,i}+S_{\ell,j}}. \label{eq:SaturationAverageDensity}
\end{align}
To avoid division by zero or zero densities, an epsilon value is added to the saturation weights in implementation. As saturation-weighted averaging is beneficial and already in use in available simulators \cite{MRST}, we apply it to all of the presented formulations.

\subsection{Total Velocity Formulation}
Fractional flow formulations are well established for sequential simulations \cite{Moyner2018Sequential,Moncorge2018SFIcompositional} and discrete interface conditions \cite{VanDuijn1995IC, Alali2021IC}. Recently, the total velocity formulation -- the most common fractional flow formulation --  has also been used to enable significant benefits for fully implicit simulations in terms of accuracy \cite{Alali2021IC, BRENNER2020,brenner2020TV} and nonlinear convergence \cite{LEE2015HU,Hamon2016HU_Buoyancy, Watanabe2016SequentialHU,Jiang2019SFI, brenner2021sequential}. In this work we focus on the latter, where we exploit a fractional flow formulation to distinguish the parabolic total flow subproblem from the hyperbolic transport subproblem in the context of a fully implicit method. 

In the total velocity formulation, the phase velocities are computed using a total velocity variable. This allows us to identify mobilities that dominate the flow problem, designated $^F$, and those that primarily influence the transport problem, designated $^T$. Applying the described notation to the component mass flux, we obtain:
\begin{align} \label{eq:componentFluxTV}
    F_{c}^{tv} = \sum_\ell^{n_p} x_{c,\ell}^{T} \rho_\ell^{T} u_\ell^{tv},
\end{align}
where $u^{tv}$ is the phase velocity computed with the total velocity formulation. A total velocity is then defined as the sum of the phase velocities in the Darcy form (eq. \eqref{eq:Darcy}), 
\begin{align} \label{eq:totalvelocity}
    u_t \equiv \sum_\ell u_\ell = T \sum_\ell \lambda_\ell^F \Delta \Phi_\ell.
\end{align}
Using $u_t$, we can then rewrite \eqref{eq:standardPhaseVelocity} and compute the phase velocity as 
\begin{align}
    u_\ell^{tv} =  \frac{\lambda_\ell^{T}}{\lambda_t^{T}} u_t + T \sum_{m}^{n_p} \frac{\lambda_\ell^{T} \lambda_m^{T}}{\lambda_t^{T}} (g_m - g_\ell) + T \sum_{m}^{n_p} \frac{\lambda_\ell^{T} \lambda_m^{T}}{\lambda_t^{T}} (\Delta p_{cap,m} - \Delta p_{cap,\ell}), \label{eq:totalVelocityPhaseVelocity}
\end{align}
where total mobilities are equal to the sum of the phase mobilities, i.e. $\lambda_t = \sum_\ell \lambda_\ell$. In the case where $\lambda_t = 0$, we add an epsilon in implementation. Although we designate the transport mobilities $\lambda^T$ all the same here, note that they can be treated differently in each of the terms (see Section \ref{sec:TransportUpwinding} where different numerical treatments for the transport mobilities are explored). 

We highlight that Eq. \eqref{eq:totalVelocityPhaseVelocity} only redistributes the total velocity and does not change the sum of the phase velocities. I.e. $\sum_\ell u_\ell^{tv} = u_t$. Hence, for problems with incompressible immiscible fluids, the flow mobilities ($\lambda^F$) completely determine the flow subproblem and its pressure field solution. 

\subsection{Total Mass Flux Formulation}
In addition to the presented existing formulations, we propose a novel total mass flux formulation. For cases where the fluids may be compressible, this fractional flow formulation still allows for the strict separation of the flow subproblem from the transport fluid properties. In essence, this extends the total velocity formulation's properties on their original test problems (incompressible volume balance) to a more general mass balance framework. 

Summing the mass balance equations of each component, eq. \eqref{eq:DifferentialFormMassConservation}, we obtain the total mass balance equation which can be expressed as
\begin{align}
    \phi \ \frac{\partial \Bar{\rho}}{\partial t} + \nabla \cdot  f_t = q_t ,
    \label{eq:FlowFormMassConservation}
\end{align}
where $\Bar{\rho}$ is the mass-weighted average density, the total mass flux is defined as
\begin{align}
    f_t \equiv \sum_\ell f_\ell  = \sum_\ell \rho_\ell u_\ell , \label{eq:totalmassflux}
\end{align}
and $q_t$ is a total mass source term constructed similarly to the total mass flux in \eqref{eq:totalmassflux}.
Note that as the phase compositions sum up to unity, they are not explicitly present in the flow problem, eq. \eqref{eq:FlowFormMassConservation}, and only implicitly influence flow through an effect on phase densities.

Defining a fluid mass mobility $\Lambda_\ell = \rho_\ell \lambda_\ell$, we can now rewrite the phase mass fluxes $f_\ell$ in terms of the total mass flux, similar to eq. \eqref{eq:totalVelocityPhaseVelocity}: 
\begin{align}
    f_\ell^{tm} =  \frac{\Lambda_\ell^T}{\Lambda_t^T} f_t + T \sum_{m}^{n_p} \frac{\Lambda_\ell^T \Lambda_m^T}{\Lambda_t^T} (g_m - g_\ell) + T \sum_{m}^{n_p} \frac{\Lambda_\ell^T \Lambda_m^T}{\Lambda_t^T} (\Delta p_{cap,m} - \Delta p_{cap,\ell}),
    \label{eq:phasemassfluxTotForm} 
\end{align}
where the discrete total mass flux is then be computed as 
\begin{align} \label{eq:totalmassflux_discretized}
    f_{t}  = T \sum_\ell \Lambda_\ell^F \Delta \Phi_{\ell}.
\end{align}
Finally, using the phase mass fluxes, the discretized component flux is equal to:
\begin{align} \label{eq:componentflux_TM}
    F_{c}^{tm} = \sum_\ell^{n_p} x_{c,\ell}^{T} f_\ell^{tm}.
\end{align}
Note that the total mass flux is the defining physical quantity being transferred through the domain, whereas the total velocity only satisfies that definition for cases with incompressible fluids. Moreover, as will be shown later, the total mass formulation in combination with hybrid upwinding allows for a smooth solution space of the compressible flow subproblem. For this purpose, we remark that, by substituting constraint \eqref{eq:compFracConstraint}, the sum of component fluxes is equal to the sum of phase fluxes, i.e. total mass flux,
\begin{align} \label{eq:TMEqualsTV}
    F_t = \sum_c^{n_c} F_{c} = \sum_c^{n_c} \sum_\ell^{n_p} x_{c,\ell} f_\ell  =   \sum_\ell^{n_p} f_\ell = f_t .
\end{align}

\subsection{Newton-Raphson: the Nonlinear Solver of Choice} \label{sec:Newton}
The Newton-Raphson method, often referred to simply as Newton's method, is the nonlinear solver of choice in reservoir simulation. Newton solves the nonlinear system by iteratively taking linear steps along its derivative. If nonlinearities are strong and the time step is large, the linear updates may fail to converge to the solution or even diverge \cite{Jenny2009, WANG2013TrustRegion}. More generally, time step cuts and many iterations may be required in these cases. 

To alleviate this issue, the reservoir simulation community resorts to heuristic Newton improvement strategies. The most common are saturation update damping strategies, e.g. modified Appleyard saturation update chopping which is applied by Eclipse \cite{Eclipse}. Modified Appleyard chopping ensures that saturation values remain within the feasible range, that is $0 < S_\ell < 1$ (assuming an effective saturation or no residual saturations), and that updates are bounded by a set threshold, e.g. $\Delta S_\ell < 0.3$. Current discretization schemes frequently need tight thresholding to safeguard against divergence, however this can also slow down convergence when unnecessary. In our numerical test cases, section \ref{sec:TestCases}, we try not to restrict Newton excessively and pursue the most efficient nonlinear strategy possible. To enable larger time steps in these scenarios, Section \ref{sec:WA} and \ref{sec:HU} present a discretization scheme with less numerics-induced nonlinearity.

We mention here that other nonlinear strategies can be combined with our approach to further improve convergence.
Recent developments for the fully implicit simulation of flow in porous media include---but are not limited to---physics-based damping strategies \cite{Jenny2009,WANG2013TrustRegion,Li2015TrustRegion,Moyner2017TrustRegions}, ordering-based nonlinear solvers \cite{kwok2007potential,natvig2008fast,klemetsdal2019efficient}, nonlinear preconditioning \cite{FieldSplitPreconditionedInexactNewtonAlgorithms_LiuKeyes,skogestad2016two,dolean2016nonlinear,Klemetsdal2020Schwarz}, nonlinear multigrid \cite{toft2018full,la2018nonlinear,lee2020nonlinear}, continuation methods \cite{younis2010adaptively,jiang2018dissipation}, and Anderson acceleration \cite{lott2012accelerated,lipnikov2013anderson}.

\section{Treatment of Numerical Fluid Properties} \label{sec:FluidPropertyTreatment}
Fluid properties depend on pressure and composition in a highly nonlinear fashion. These complex relationships complicate the solution of the reservoir simulation equations. Additional strong non linearities (e.g. kinks) are introduced by the discretization.  Importantly, here we differentiate between the former category of physical nonlinearities and latter category of numerical nonlinearities. While the physical aspects of the problem are essential to capture, we would like to minimize the severity and number of numerics-induced features. To that end, this section describes several strategies to treat the fluid properties in the computation of discrete mass fluxes. The effect on the nonlinearities of the full problem will then be analyzed in section \ref{sec:Analysis}.

Although the total velocity and total mass flux formulations are lengthier and require the computation of a total parameter, they allow for a more suitable treatment of properties in the flow and transport subproblems. Several schemes have been proposed in recent literature. Table \ref{tab:upwindingChoice} summarizes the different upwinding techniques applied to the flow and transport subproblems, respectively, in different discretization schemes.

The schemes are named based on their upwinding strategies in the flow and transport subproblems. The schemes presented are: the standard PPU scheme, PPU; the phase upwinding schemes presented in \cite{Moncorge2020Upwinding}, PPU-PU and PU; the original hybrid upwinding strategy with PPU mobilities in flow, PPU-HU; and a novel hybrid upwinding strategy with weighted average mobilities, WA-HU. We emphasize that all presented schemes are monotone, consistent and in the limit, $V_i \xrightarrow[]{} 0 \ \forall \ i \in \{1,...,N\}$, converge to the same solution. Note that all the schemes with a fractional flow formulation can be applied to both a total velocity or total mass formulation. We highlight that the presented WA-HU scheme contains two key contributions of this work: (i) an improved and general Weighted Averaging and (ii) a HU framework for multiphase compositional fluids.

\setlength{\arrayrulewidth}{.5mm}
\begin{table}[htbp] 
\caption{Upwinding choices for the standard, flow and transport discrete fluid properties in different schemes. Standard variables are present in Eqs. \eqref{eq:standardMassFlux} and \eqref{eq:standardPhaseVelocity}. Flow variables determine the total parameters $u_t$ (Eq. \eqref{eq:totalvelocity}) or $f_t$ (Eq. \eqref{eq:totalmassflux_discretized}). Transport variables determine the component fluxes for the mass balance equations, see Eqs. \eqref{eq:componentFluxTV} and \eqref{eq:totalVelocityPhaseVelocity} for TV, or see Eqs. \eqref{eq:phasemassfluxTotForm} and \eqref{eq:componentflux_TM} for TM.} \label{tab:upwindingChoice}
\centering 
\begin{tabularx}{\textwidth}{l *{5}{Y}}

\toprule
{}                 & {\textbf{PPU}} & { \textbf{PPU-PPU}} & { \textbf{PPU-PU}} & { \textbf{(PPU-)HU}} & { \textbf{WA-HU}} \\
\midrule
\textbf{Formulation}   & Standard      & Frac. Flow           & Frac. Flow         &   Frac. Flow                  & Frac. Flow  \\
\midrule
\textbf{Standard $^{st}$}   &  PPU                &            &                   &                    &  \\
\textbf{Flow $^F$}     &                 & PPU               & PPU                   & PPU                   & WA \\
\textbf{Transport $^T$} &           & PPU               & PU                    & HU                    & HU \\
\bottomrule
\end{tabularx}
\end{table}



\subsection{Flow and Standard Upwinding Strategies}
\subsubsection{Phase Potential Upwinding (PPU)}
In PPU the phase parameters are upwinded with respect to the phase potential \cite{Sammon1988PPU}, that is
\begin{align}
    \lambda_{\ell,ij}^{PPU} =
    \begin{cases}
        \lambda_{\ell,i} \quad & \text{if   } \Delta \Phi_{\ell,ij} \geq 0 \\
        \lambda_{\ell,j} \quad & \text{otherwise}
    \end{cases}.
\end{align}
If we use PPU mobilities for both flow ($\bullet^F$) and transport ($\bullet^{T}$) variables (scheme PPU-PPU), the fractional flow formulation is equivalent to the standard PPU formulation. Although equivalent, such an implementation still provides value when one wants to split the pressure and transport problem. For example, for flux computation reasons at heterogeneous interfaces \cite{Alali2021IC} or in sequential settings (also see comments in section \ref{sec:PhaseUpwinding}).

\subsubsection{Weighted Averaging} \label{sec:WA}
In this work, we propose a novel phase-based weighted averaging (WA) for the flow discretization. This scheme simplifies and improves existing HU-WA methods \cite{Hamon2016HU_Buoyancy,Hamon2018HU_capillarity}, as well as extends its applicability to compositional, miscible and compressible fluids. 

Through the splitting of the flow and transport subproblems, the flow mobilities $\lambda_\ell^F$ (or mass mobilities $\Lambda_\ell^F$) need to primarily satisfy the monotonicity conditions of the flow problem. These are looser than the conditions of the coupled parabolic-hyperbolic problem and hence allow for a more physical and smooth choice of the properties governing the flow problem. 

Defining a weighting coefficient $\beta_{\ell,ij}$ for every phase at the interface between cell $i$ and $j$, the flow mobilities, $\lambda_{\ell,ij}^{WA}$, can be computed as
\begin{align} \label{eq:weightedAverageMobility}
    \lambda_{\ell,ij}^{WA} = \beta_{\ell,ij} \lambda_{\ell,i} + (1-\beta_{\ell,ij}) \lambda_{\ell,j}.
\end{align}
Note that we use the total velocity formulation with mobilities throughout this description, however, the process is identical when applied to mass mobilities in a total mass formulation. 

$\beta_{\ell,ij}$ specifies how smoothly to transition from one upwind direction to the other. Phase velocities can change direction often if counter-current forces, such as capillary and gravity potentials, are large. Conversely, changes in the sign of a phase velocity are rare in viscous dominant regimes which are co-current by nature. Additionally, when a phase potential or a phase velocity is large, that phase velocity is less likely to change sign.  
To this end, we define 
\begin{align} \label{eq:beta}
    \beta_{\ell,ij} = 0.5 + \frac{1}{\pi} \arctan\Bigg(\gamma_\ell \ \frac{\Delta \Phi_{\ell,ij}}{|{g}_{ij,ref}| + |{c}_{\ell,ref}|}\Bigg),
\end{align}
where the phase potential is normalized by a reference gravity potential and reference capillary potential. In this work we set these to be: the face gravity potential computed with the maximum surface phase density ($\rho_{\ell,s}$),
\begin{align}
    {g}_{ij,ref} = \max_\ell(\rho_{\ell,s}) \boldsymbol{g} \Delta z_{ij},
\end{align}
and the capillary pressure difference between a saturation of $0.2$ and $0.8$, ${c}_{\ell,ref} = p_{\ell,cap}(0.8)-p_{\ell,cap}(0.2)$. Note that we choose values that do not change during the simulation. Here, surface phase densities are used to be a representative density value. Other approximate choices are also valid. The goal of ${c}_{\ell,ref}$ is to capture a representative capillary pressure difference that two neighboring cells might encounter in the simulation. We also tested using the simulation values of the gravity and capillary potential instead of reference values. However, we experienced little change in nonlinear efficiency in addition to higher computational cost. Another considered strategy used lagged simulation values as the reference values.

The arctan function, proposed in \cite{Hamon2016HU_Buoyancy}, is chosen as it allows for scheme monotonicity while providing a smooth transition between upwind directions. Lastly $\gamma_\ell$ is a scaling coefficient which determines how close the mobilities will be to PPU mobilities ($\gamma \xrightarrow{} \infty$) or arithmetically averaged mobilities ($\gamma = 0$). Hence, we design $\gamma_\ell$ such that it trades off the negative aspects of PPU and AA. As discussed, PPU schemes introduce a kink when switching the upwind direction. On the other hand, AA schemes lead to downwind dependence and hence significant curvature in the total velocity, or mass flux, profile with respect to downwind saturation (also see section \ref{sec:Analysis} for visual examples of kinks and curvature). We quantify these effects as
\begin{align}
    \text{bend}^{AA}_\ell = \max_{S_\ell}\bigg|\frac{\partial^2 \lambda_\ell}{\partial S_\ell^2}\bigg| = \frac{\rho_\ell}{\mu_\ell} \max_{S_\ell}\big|k_{r,\ell}''\big|
\end{align}
with $k_{r,\ell}''$ the second derivative of the relative permeability function, and 
\begin{align}
    \text{kink}^{PPU}_\ell = \max_{S_{\ell,1},S_{\ell,2}} \Big|\lambda_\ell(S_{\ell,1})-\lambda_\ell(S_{\ell,2}) \Big| = \frac{\rho_\ell}{\mu_\ell} k_{r0,\ell},
\end{align}
where $k_{r0,\ell}$ is the phase relative permeability end point. We then define a scaling coefficient, $\gamma_\ell$, to capture the ratio of the nonlinearities;
\begin{align}
    \gamma_\ell = \alpha \frac{\text{bend}^{AA}_\ell}{\text{kink}^{PPU}_\ell} = \frac{\alpha}{k_{r0,\ell}} \max_{S_\ell}\big|k_{r,\ell}''\big|
\end{align}
where $\alpha$ is an optional user tuning parameter (set to $1$ in this work).

The presented weighted averaging is a generalized and improved version of the method in \cite{Hamon2016HU_Buoyancy}, yet still retains the same structure. Specifically, the scheme is extended to compositional simulations and improved in terms of more physical and effective smoothing. This is achieved while retaining the same functions in eq. \eqref{eq:weightedAverageMobility} and eq. \eqref{eq:beta}. As such, the propositions on saturation estimate, pressure estimate, solution existence, bounds on pressure matrix coefficients, invertability of the pressure and Jacobian matrix of the original WA scheme still hold. Finally, \ref{app:Proof} presents the proof of monotonicity of the total velocity with respect to pressure. The proof specifies that monotonicity is guaranteed when the weight in the PPU upwind direction is greater than, or equal to, the weight in downwind direction. That is
\begin{align}
    \begin{array}{l}
         \beta_{\ell,ij} \geq 0.5   \quad  \text{if   } \Delta \Phi_{\ell,ij} \geq 0,  \\
         \beta_{\ell,ij} < 0.5   \quad  \text{otherwise}
    \end{array} 
\end{align}
or equivalently, following eq.\eqref{eq:beta}, $\gamma_\ell \geq 0$. Specifying $\alpha \geq 0$, this condition is satisfied by construction. Hence, the scheme is unconditionally monotone. 

\subsection{Transport Upwinding Strategies} \label{sec:TransportUpwinding} 
\subsubsection{Hybrid Upwinding (HU)} \label{sec:HU}
Hybrid upwinding aims to improve non-linear solver convergence by upwinding the transport variables more appropriately for the respective physics. Initially presented for two-phase flow \cite{LEE2015HU}, recent works have aimed to effectively extend the hybrid upwinding methodology to 3-phase flow \cite{Hamon2018HU_capillarity, Lee2016C1continuous3phase,  Lee2018HU3phaseCapillarity} and compositional simulations \cite{Moncorge2020Upwinding}. We build on these works and present a compositional hybrid upwinding framework that closely resembles that of Moncorg\'e et al. \cite{Moncorge2020Upwinding}.

Hybrid upwinding defines a specific upwinding for each of the forces. To this end, we specify superscripts indicating viscous($^V$), gravity($^G$) and capillary($^C$) terms. Applying these to eq. \ref{eq:totalVelocityPhaseVelocity}, or similarly to eq. \ref{eq:phasemassfluxTotForm}, we obtain
\begin{align}
    u_\ell^{tv} =  \frac{\lambda_\ell^{V}}{\lambda_t^{V}} u_t + T \sum_{m}^{n_p} \frac{\lambda_\ell^{G} \lambda_m^{G}}{\lambda_t^{G}} ({g}_m - {g}_\ell) + T \sum_{m}^{n_p} \frac{\lambda_\ell^{C} \lambda_m^{C}}{\lambda_t^{C}} (\Delta p_{cap,m} - \Delta p_{cap,\ell}). \label{eq:HUtotalvelocity}
\end{align}
The viscous mobilities are then upwinded with respect to the total velocity
\begin{align}
    \lambda_{\ell,ij}^{V} =
    \begin{cases}
        \lambda_{\ell,i} \quad & \text{if   } u_{t,ij} \geq 0 \\
        \lambda_{\ell,j} \quad & \text{otherwise}
    \end{cases}.
\end{align}
For the gravity and capillary forces, we apply an upwinding based on potential ordering for each of the forces separately. This treatment is a variation of the method proposed by Brenier and Jaffr\'e in \cite{Brenier1991Upwind}, where the original analysis encompasses this option. For the upwinding, we define a gravity upwinding parameter:
\begin{align}
    \omega_\ell^G = \sum_{m}^{n_p} \big[(\lambda_{m,i} \cdot({g}_m < {g}_\ell) + \lambda_{m,j} \cdot({g}_m > {g}_\ell)\big] ({g}_m - {g}_\ell). \label{eq:gravityUpwindingParameter}
\end{align}
The sign of this upwinding parameter indicates the upwind direction for the mobilities of the gravity term, i.e.
\begin{align}
    \lambda_{\ell,ij}^{G} =
    \begin{cases}
        \lambda_{\ell,i} \quad & \text{if   } \omega_\ell^G \geq 0 \\
        \lambda_{\ell,j} \quad & \text{otherwise}
    \end{cases} .
\end{align}
Similarly, we define a capillary upwinding parameter, replacing the gravity potentials in eq.\eqref{eq:gravityUpwindingParameter} with $\Delta p_{cap}$, and upwind accordingly 
\begin{align}
    \lambda_{\ell,ij}^{C} =
    \begin{cases}
        \lambda_{\ell,i} \quad & \text{if   } \omega_\ell^C \geq 0   \\
        \lambda_{\ell,j} \quad & \text{otherwise}
    \end{cases}.
\end{align}
Physically speaking, the mobilities of the viscous term are chosen upstream of the total flux. The gravity properties of the heavier phases are chosen from the higher cell and vice-versa for the lighter phases (i.e. from the lower cell). The mobilities of the capillary term are chosen towards the higher saturation for the least wetting phases. 

Considering the two phase water-gas case: gravity gas mobilities would be upwinded to the lower cell, and capillary gas mobilities would be upwinded to the cell with lower gas saturation. The water mobilities are then upwinded in the opposite directions. If an additional intermediate phase is introduced, the presented scheme selects its upwind direction based on the balance of mobility potentials (see eq. \ref{eq:gravityUpwindingParameter}).

Finally, we specify the upwinding for the fluid properties outside of the total parameter: $x_{c,\ell} \rho_\ell$ for total velocity (see eq. \eqref{eq:componentFluxTV}) or only $x_{c,\ell}$ for total mass flux (see eq. \eqref{eq:TMEqualsTV}). Here we have two options. Either we upwind with respect to the total parameter or upwind for each of its components separately. Although our test cases did not indicate a significant difference, in this work we implement the latter as we favor influencing three existing small kinks (at V, G or C flips) over creating one new large one at phase (mass-)flux flips. Defining the three velocity terms in eq. \eqref{eq:HUtotalvelocity} as $V$, $G$ and $C$, eq. \eqref{eq:componentFluxTV} can be rewritten as
\begin{align} \label{eq:fluxWithTermNotation}
    F_{c}^{tv} = \sum_\ell^{n_p} \bigg( x_{c,\ell}^{V} \rho_\ell^{V} V + x_{c,\ell}^{G} \rho_\ell^{G} G + x_{c,\ell}^{C} \rho_\ell^{C} C \bigg).
\end{align}
Here, each term is upwinded with respect to the corresponding velocity component. E.g.
\begin{align} \label{eq:compositionUpwindingEachTerm}
    x_{c,\ell}^{V} \rho_\ell^{V}  =
    \begin{cases}
        (x_{c,\ell} \rho_{\ell})_i  \quad & \text{if   } V \geq 0 \\
        (x_{c,\ell} \rho_{\ell})_j  \quad & \text{otherwise}
    \end{cases} .
\end{align}
Note that although all the equations in this section are presented with the total velocity formulation, the methodology can be extended in a straightforward manner to the total mass formulation. 

\subsubsection{Phase Upwinding (PU)} \label{sec:PhaseUpwinding}
Phase Upwinding was proposed for sequential simulations, however, for completeness we include it in this overview. Applying Phase-Potential Upwinding (PPU) in sequential settings -- that is a PPU-PPU scheme -- can lead to downwinding for the second subproblem. Phase upwinding \cite{Moncorge2020Upwinding} resolves this issue by upwinding with a consistent condition defined by Brenier and Jaffr\'e \cite{Brenier1991Upwind} where potential ordering is applied to the viscous, gravity and capillary forces together. Note that in the fully implicit method, consistency between the subproblems is guaranteed and as such PPU-PU, PPU-PPU and standard PPU are identical. 

A PU-PU scheme has also been proposed for sequential simulations. In that case, PU upwinding from the previous transport problem is also used for the next flow problem. This can damp oscillatory flow updates between sequential iterations and improve convergence. However, due to the lag in upwinding, PU-PU generally leads to slower nonlinear convergence in fully implicit simulations \cite{Moncorge2020Upwinding}.

\section{Analysis of Discrete Residual Space} \label{sec:Analysis}
To illustrate the effect of the presented discretization schemes, we analyse the total velocity profile and the continuous Newton paths in the discrete residual space. For this purpose, a one-cell water-gas test case is designed. The design of the test case allows us to fully visualize and understand the problem. As the problem is for illustration purposes we omit any dimensional units. Figure \ref{fig:setup1cell} displays the problem set-up. We solve for the variables of cell $L$ which is surrounded by no-flow boundaries except for fixed boundary conditions on its right ($R$): $p_{w,R}=210$ and $S_{w,r}=0.2$. The domain is at an angle such that cell $R$ is located higher than cell $L$. Grid and rock properties are chosen to be: permeability $k=1$, porosity $0.3$, interface area $A=1$, distance $\Delta x=1$ and gravity multiplied by depth difference $g\Delta z = -1$. Furthermore we define fluid properties for the two phases, water and gas, respectively: mobilities $\lambda_w = S_w^2$ and $\lambda_g = S_g^3$, fixed densities $\rho_{w,L}=6.18$, $\rho_{w,R}=6$, $\rho_{g,L}=2.06$ and $\rho_{g,R}=2$. Note that we simplistically vary densities to show the effect of compressibility and the corresponding kinks at changes in upwinding directions of the various flux terms (see eq. \eqref{eq:compositionUpwindingEachTerm}). We use a table input with spline interpolation for the capillary pressure relationships. The table input can be found in \ref{app:Capillary}. In this proxy case we multiply the input data by $5 \times 10^{-5}$ such that the acting forces, gravity and capillarity, are of comparable magnitude. \ref{app:Capillary} also reasons for a couple good practices with respect to the implementation of capillary relationships.

\begin{figure}[htbp]
    \centering
    \includegraphics[width=.3\textwidth]{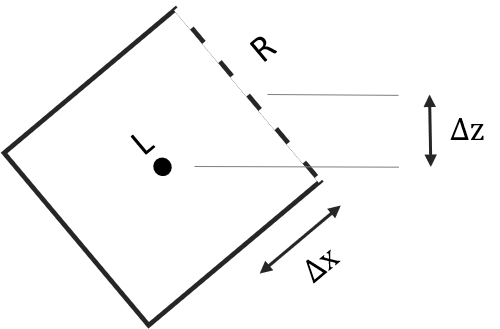}
    \caption{one-cell problem set-up.}
    \label{fig:setup1cell}
\end{figure}

Figure \ref{fig:TotalVelocity} compares the total velocity profiles of the PPU and WA schemes. The PPU profile clearly contains kinks at flips in phase velocity (black lines). These kinks are no longer present in the WA profiles. Importantly, we also show how the new proposed WA scheme still closely resembles the PPU profile. In contrast the originally proposed WA scheme \cite{Hamon2016HU_Buoyancy, Hamon2018HU_capillarity} deviates significantly from PPU and requires additional smoothing to satisfy its monotonicity requirements. This notable difference illustrates that the new scheme resolves issues brought forward in \cite{JIANG2017C1PPU}, where the WA flux seemed significantly different from the PPU flux. The reason for this difference is the increased flexibility of the weighting coefficients in the new scheme. Specifically, the new scheme can arithmetically average the phase mobility when near a phase's velocity flip (black lines), and take a single phase potential upwind direction otherwise. This allows for a smooth transition near the black lines, as well as almost no downwind saturation curvature (see profile along the saturation axis). 

Moreover, the black lines delineate the counter-current flow regime. As solutions in this regime are by nature close to the phase velocity flips, Newton iterations will cross them more often in counter-current flow problems. Consequently we see that arithmetic averaging is generally more efficient at solving counter-current flow problems. Vice-versa, deep in the co-current flow regimes, PPU outperforms AA due to the linear profile for the downwind cell. Weighted averaging combines both strategies and selects efficient upwinding weights for each interface in the discrete grid. This leads to great performance in both flow regimes as shown in the numerical examples of Section \ref{sec:TestCases}. 

\begin{figure} [htbp]
\centering
\begin{subfigure}[t]{0.49\textwidth}
\centering
\includegraphics[width=\linewidth]{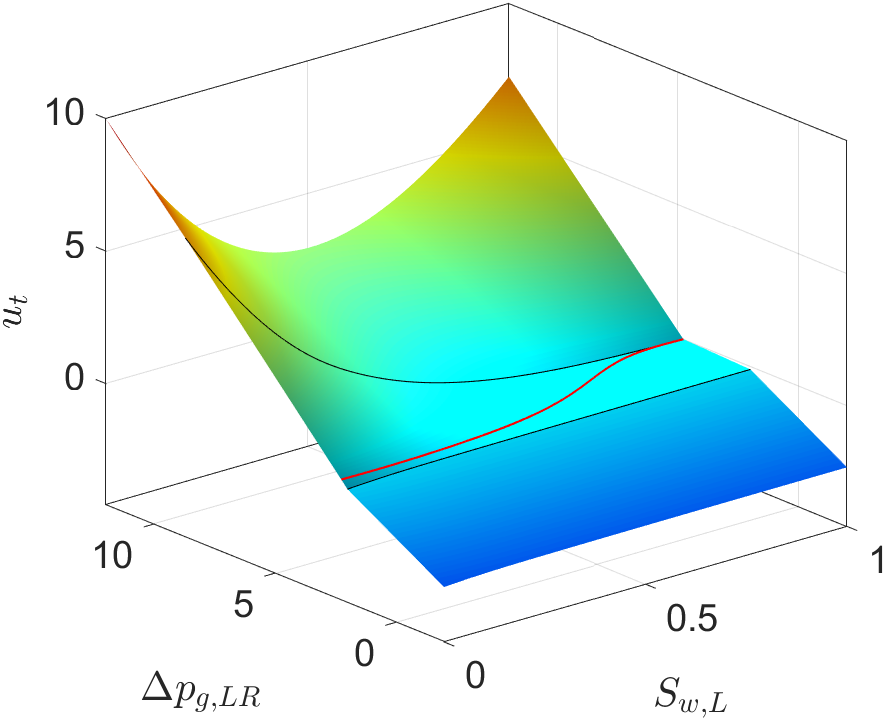}\hfill 
\caption{\label{fig:TotalVelocityPPU} PPU}
\end{subfigure}
\hfill
\begin{subfigure}[t]{0.49\textwidth}
\centering
\includegraphics[width=\linewidth]{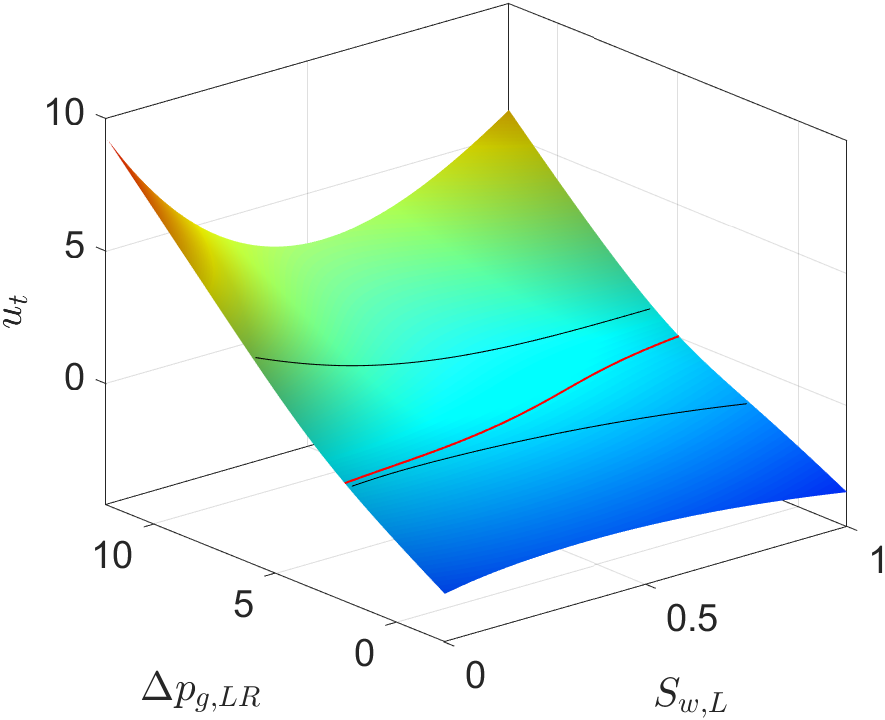}\hfill 
\caption{\label{fig:TotalVelocityWAold} WA (Hamon et al. \cite{Hamon2018HU_capillarity})}
\end{subfigure}
\hfill
\begin{subfigure}[t]{0.49\textwidth}
\centering
\includegraphics[width=\linewidth]{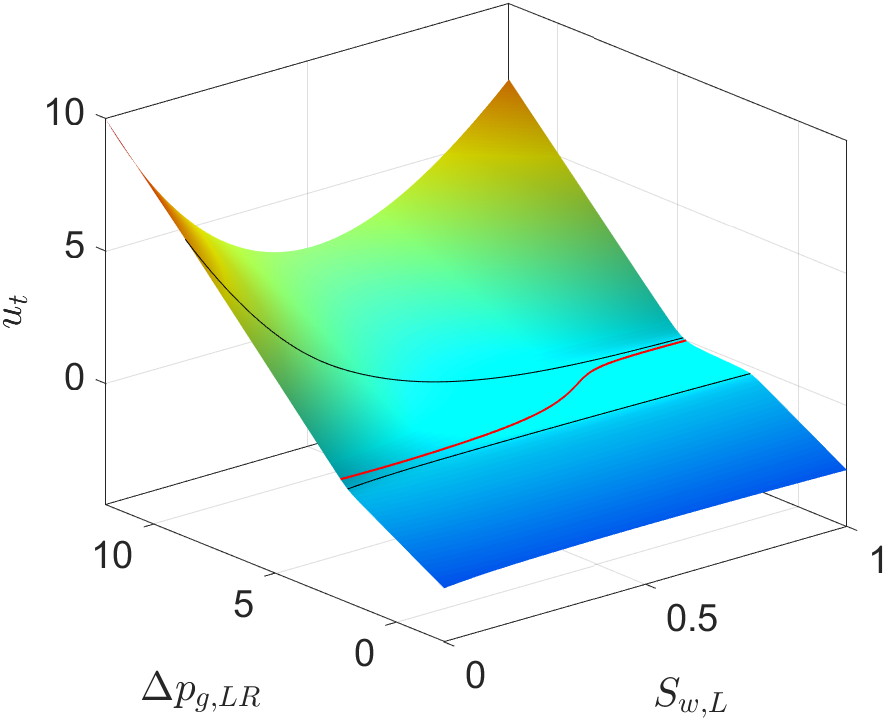}\hfill 
\caption{\label{fig:TotalVelocityWAnew} WA}
\end{subfigure}
\caption{\label{fig:TotalVelocity} Total velocity profiles of the one-cell proxy test case for the PPU, WA \cite{Hamon2018HU_capillarity} and new WA scheme. The left cell pressure and saturation are varied, while the right cell boundary conditions remain fixed. Phase potential flips and total velocity flips (i.e. changes in direction) are indicated with black and red lines, respectively. Note that in the original WA scheme, capillary pressures are excluded from the phase potential flips because they are treated separately. }
\end{figure}

The effect of these kinks on Newton convergence becomes more evident when investigating the solution space and Newton paths. We illustrate the solution space by plotting the $L_2$-norm of the residual. Furthermore, a Newton path is defined as the continuous path from an initial guess to the problem solution. It is constructed by taking small steps along the Newton step direction at each subsequent location. To compute and plot the above, we additionally specify the previous left cell saturation $S_{w,l}^n = 0.4$ at time $n$ (assuming the solution is at $n+1$) and time step $\Delta t = 0.1$. 

The resulting Newton paths and $L_2$ residual contours are plotted in Figure \ref{fig:NewtonPath}. We emphasize discontinuities in the derivative of the residual (strong nonlinearities known to be problematic for Newton); upwinding direction changes are indicated for the phase velocities (blue), total velocity/mass flux (red) and capillary term (green) when relevant. I.e. the phase direction changes are only indicated in these plots if they induce a non-linearity in the solution space of the corresponding scheme. 

From experience \cite{Hamon16HU_Analysis, Hamon2016HU_Buoyancy} and intuition, we know straighter and $C^1$ continuous Newton paths lead to quicker convergence. Figure \ref{fig:NewtonPath} confirms that the proposed WA-HU scheme results in favourable properties. Importantly, the most severe kinks are removed from the solution space and the paths are more direct. It is interesting to point out that the non-linearity corresponding to a capillary term flip (in green) is rather small. This is due to the fact that the flip happens at $S_R=S_L$ and hence the $C$ term (see eq. \eqref{eq:fluxWithTermNotation}) is symmetric (and smooth) for TV schemes and close to symmetric for TM schemes. Nevertheless, we note that these kinks can be more present when taking into account compositional upwinding at a sharp front. 

Finally, we highlight that there is an important difference between TV and TM schemes. In TV schemes the red line indicates a total velocity flip ($u_t=0$), whereas in TM schemes the red line represents the total mass flux flip ($f_t=0$). As this is a gravity segregation problem, we know the solution consists of a volume-balance and hence satisfies $u_t=0$. Moreover, in practice, solutions of this kind occur more often than total mass flux balances. Consequently, as displayed in figure \ref{fig:NewtonPathWAHU_TM}, TM solutions are more likely to be close to, but not on, a kink. As will be shown in the test case results, this proximity can lead to complications in convergence for TM schemes.

\begin{figure} [htbp]
\centering
\begin{subfigure}[t]{0.48\textwidth}
\centering
\includegraphics[width=\linewidth]{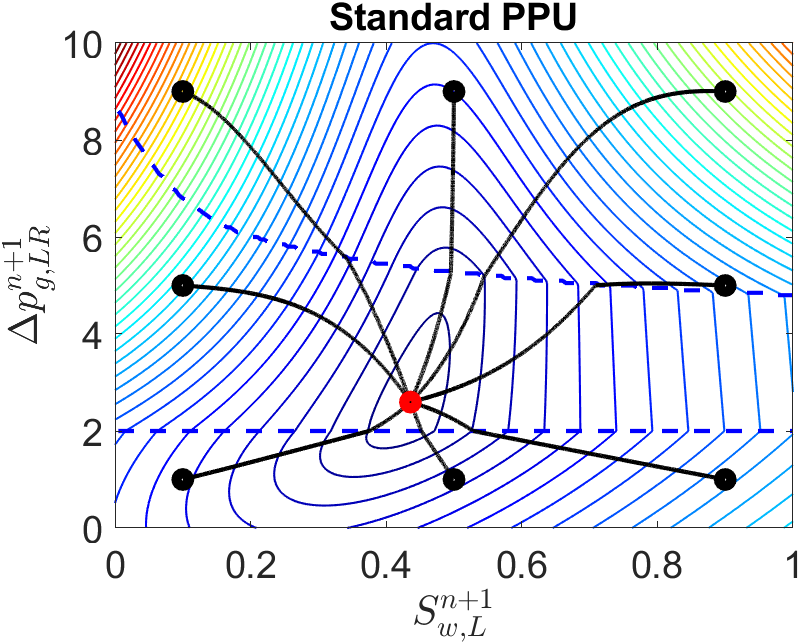}\hfill 
\caption{\label{fig:NewtonPathPPU}}
\end{subfigure}
\hfill
\begin{subfigure}[t]{0.48\textwidth}
\centering
\includegraphics[width=\linewidth]{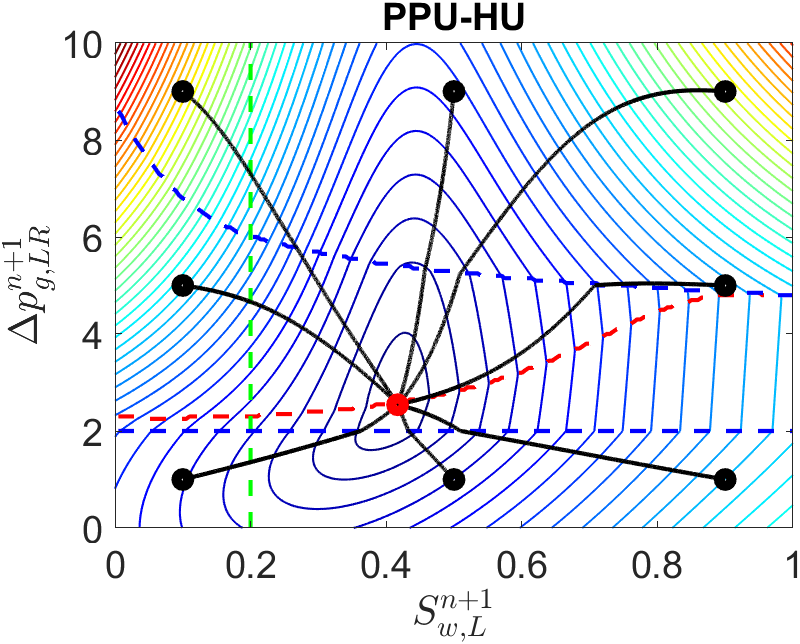}\hfill 
\caption{\label{fig:NewtonPathPPUHU}}
\end{subfigure}
\hfill
\vspace{.4cm}
\begin{subfigure}[t]{0.48\textwidth}
\centering
\includegraphics[width=\linewidth]{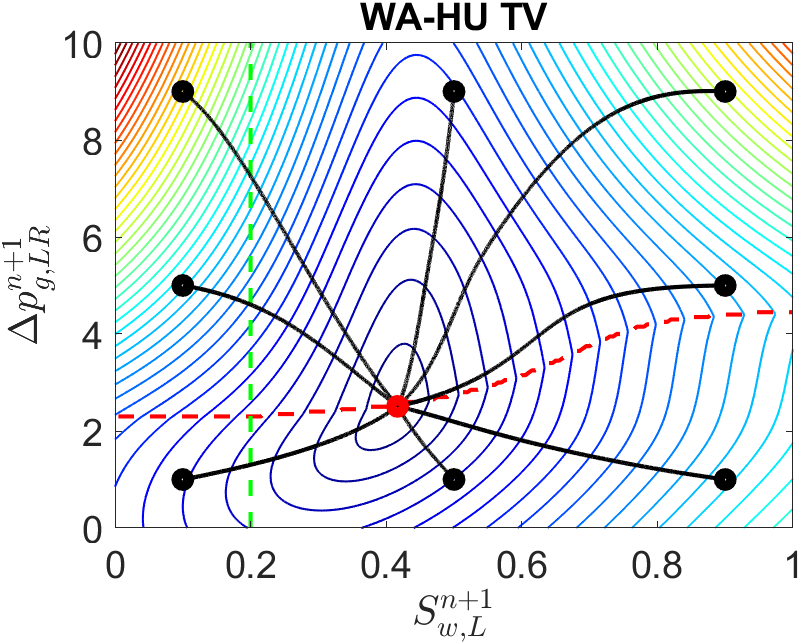}\hfill 
\caption{\label{fig:NewtonPathWAHU}}
\end{subfigure}
\hfill
\begin{subfigure}[t]{0.48\textwidth}
\centering
\includegraphics[width=\linewidth]{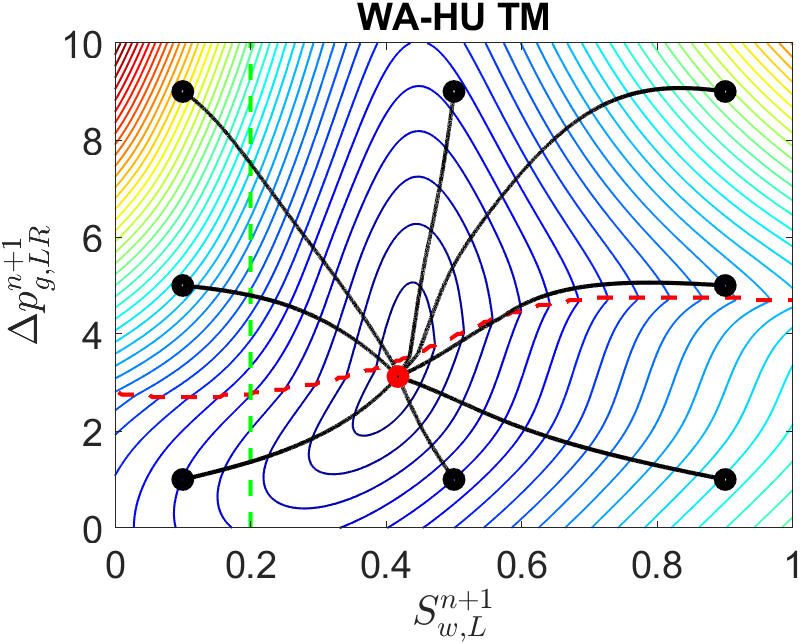}\hfill 
\caption{\label{fig:NewtonPathWAHU_TM}}
\end{subfigure}
\hfill
\caption{\label{fig:NewtonPath} Residual contours (in $L_2$-norm) and Newton paths for (a) PPU, (b) PPU-HU and (c) WA-HU with a total velocity formulation, and (d) WA-HU with a total mass formulation. The Newton paths connect the initial guesses (black dots) to the solution (red dot). The dashed red line indicates $u_t = 0$ for TV and $f_t = 0$ for TM. The dashed blue lines show $\Delta \Phi_\ell = 0$ and the dashed green line $\Delta p_{cap,w} = \Delta p_{cap,g}$.  }
\end{figure}

\section{Numerical Test Cases} \label{sec:TestCases}
To assess the proposed schemes' performance, several test cases are evaluated. The cases are designed with increasing spatial and physical complexity to demonstrate the WA-HU scheme's performance across the board. To this end, we look for the scheme which consistently produces low Newton iteration counts. Test cases \ref{sec:1Dtest} and \ref{sec:2DTiltedBox} are run in an in-house simulator. Test cases \ref{sec:barriers}, \ref{sec:Plume}, \ref{sec:fractures} and \ref{sec:SPE10} are run using an implementation in the development version of MRST \cite{MRST,Krogstad2015MRST}. We use mass variables for the compositional simulations, and natural variables otherwise. Note that the proposed schemes can be applied to both choices of primary variables.

As the convergence settings are similar throughout, we briefly describe them here. If a time step does not converge after $20$ iterations (or $15$ iterations for \ref{sec:1Dtest} and \ref{sec:2DTiltedBox}), two time steps half as large are taken. Convergence is reached at a normalized residual norm of $||r||_2 < 10^{-6}$, where we normalize with respect to cell fluid mass. For test cases \ref{sec:1Dtest} and \ref{sec:2DTiltedBox}, saturation change and relative pressure change are also checked for all cells: $|S^{k}_i-S^{k-1}_i| <0.01$ and $\frac{|p^{k}_i-p^{k-1}_i|}{p_i^{k}} < 10^{-3}$, where $k$ is the iteration number. To maximally challenge the non linear solver we refrain from using heuristic Newton damping where possible. Saturation updates are only chopped when they lead to saturation values that exceed the physical boundaries $0<S_{\ell}<1$. This is also commonly referred to as "vanilla" Newton. As test cases \ref{sec:fractures} and \ref{sec:SPE10} are significantly harder than the other test cases, we apply Modified Appleyard saturation update damping with a maximum update of 0.2 (also see Section \ref{sec:Newton}).

\subsection{Incompressible, Immiscible 1D Gravity Segregation without Capillary Forces} \label{sec:1Dtest}
The first test case is a one-dimensional (1D) gravity segregation problem. The problem contains two fluids: a heavy phase (designated as water $w$) and a light phase (designated as gas $g$). At initialization the heavy fluid occupies the top-half of the domain and the light fluid the bottom half. Corey relative permeability functions are chosen: $k_{r,w} = S_w ^{2.5}$ and $k_{r,g} = 0.6 S_g ^{3}$. The fluid densities are $\rho_w = 1000 [kg/m^3]$ and $\rho_g = 100 [kg/m^3]$ and their viscosities both set to $\mu_w = \mu_g = 0.001 [kg/(m.s)]$. The rock permeability and porosity are chosen to be $k =100$ [mDarcy] and $\phi =0.25$, respectively. A domain of dimensions $10$m$\times 10$m$ \times 200$m is discretized into $100$ cells in the z-direction. The simulation is run with four different fixed time step sizes of $100$, $150$, $200$, and $300$ days until approximate steady state which is attained at $ 5000 $ days. As such the simulations have maximum Courant CFL numbers \cite{FRANC2016CFL} of approximately $1$, $1.5$, $2$, and $2.7$, respectively. The first three time steps in all simulations are chosen to be smaller, that is $5$, $25$ and $50$ days, to avoid convergence issues related to initialization.

Table \ref{tab:results1D} summarizes the results of the 1D gravity segregation test case. The iteration counts verify that all proposed schemes perform similarly to the state-of-the-art on a small "simple" problem. Moreover, they also indicate that the HU schemes are significantly more efficient for larger time step sizes by both converging more rapidly and avoiding divergent time steps.

\begin{table}[htbp] 
\caption{Total number of Newton iterations performed for the one-dimensional gravity segregation test case. The wasted Newton iterations due to time step cuts are included in the totals and also displayed between parentheses.} \label{tab:results1D}
\centering 
\begin{tabularx}{.9\textwidth}{l L R L R L R L R}

\toprule
{Time step size}                 & \multicolumn{2}{ c }{\textbf{PPU}}  & \multicolumn{2}{ c }{ \textbf{(PPU-)HU}} & \multicolumn{2}{ c }{ \textbf{WA-HU TV}} & \multicolumn{2}{ c }{ \textbf{WA-HU TM}} \\
\midrule

\textbf{$\Delta t_{max} = 100$} days & 243&(0)               &  241&(0)                 &  239&(0)            & 231&(0)                   \\
\textbf{$\Delta t_{max} = 150$} days &  238&(30)               & 189&(0)                 & 192&(0)                & 181&(0)                 \\
\textbf{$\Delta t_{max} = 200$} days &  262&(75)               & 187&(15)                 & 190&(15)                & 185&(15)                 \\
\textbf{$\Delta t_{max} = 300$} days &  407&(210)               & 357&(180)                     & 288&(105)                 &254&(90)                      \\
\bottomrule
\end{tabularx}
\end{table}

\subsection{Incompressible, Immiscible Tilted Box: Lock Exchange to Gravity Segregation} \label{sec:2DTiltedBox}
The second series of test cases aims to test flux dynamics across faces at different angles. To do so, a 2-D square domain is initialized with $80$\% of the cells saturated with water (heavier fluid) and the other $20$\% saturated with gas (lighter fluid). Five different cases are considered where the square box and initial conditions are rotated from gravity segregation ($0^{\circ}$) to lock-exchange ($90^{\circ}$). Note that for a tilting angle of $0^{\circ} < \theta < 90^{\circ}$, all the interfaces have a non-zero $\Delta z$ which makes the test more challenging. Figure \ref{fig:TiltedBoxSnaps} depicts the initial condition and physical process with the domain at $70^{\circ}$. 

\begin{figure}[htbp]
    \centering
    \includegraphics[width=.9\textwidth]{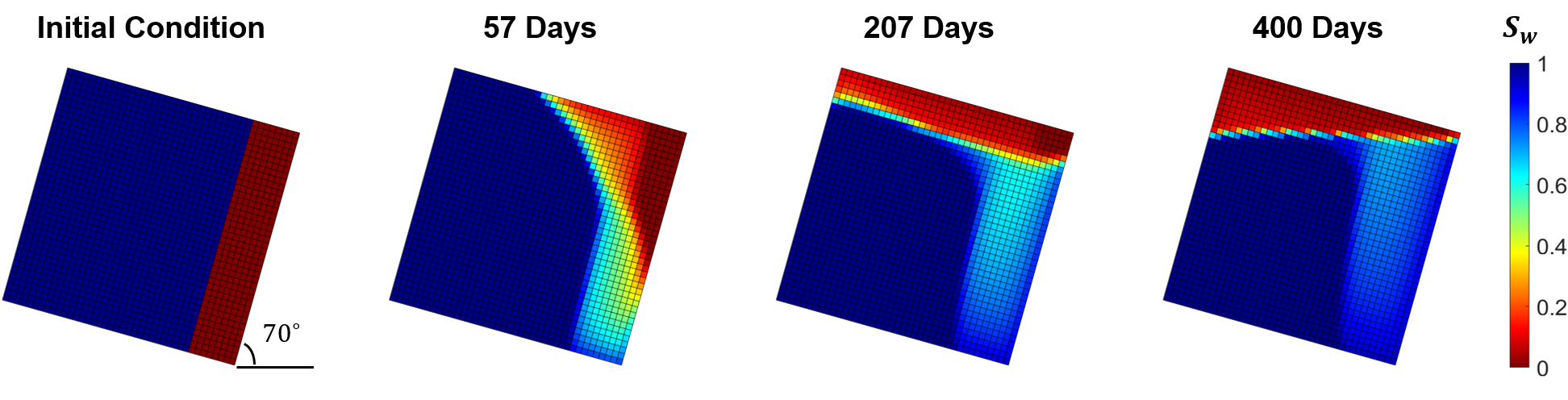}
    \caption{Snapshots of the water saturation throughout the simulation with a tilt angle $70^\circ$.}
    \label{fig:TiltedBoxSnaps}
\end{figure}

The rock and fluid properties are identical to the 1D test case. The domain is sized to be $20$m$\times 1$m $\times 20$m and is discretized into $40 \times 1 \times 40$ cells. We run the simulation for 400 days with time steps of 10 days. Again the first sequence of time steps is chosen to be smaller: 0.1, 0.5, 2 and 5 days. 

The test cases are run both with and without capillary forces. The capillary pressure relationship curves are represented using spline interpolation on the data presented in \ref{app:Capillary}. We choose this relationship as it avoids asymptotes which have little influence on the results but can hinder efficient convergence. Tables \ref{tab:resultsBoxNo} and \ref{tab:resultsBoxwithCap} present the cumulative Newton iteration counts for the simulations without and with capillary forces, respectively. Without capillary forces, the WA-HU scheme in a total mass formulation is the fastest to converge in most set-ups. However, the scheme can also run into significant convergence issues, as is the case for the $70^{\circ}$ tilted box. The time step cuts all take place at two distinct time steps where the solution for four cells lies close to the mass flux balance ($F_t=0$, eq. \eqref{eq:TMEqualsTV}). As explained in section \ref{sec:Analysis}, proximity to a kink -- in this case the dashed red line in figure \ref{fig:NewtonPathWAHU_TM} -- can complicate Newton's ability to converge efficiently. This test case gives a first indication that the location of the total mass flux kink in the mass formulation creates more convergence difficulties than the total velocity kink in the velocity formulation (see the red dashed lines in figure \ref{fig:NewtonPath}).

In a total velocity formulation, the WA-HU scheme does not exhibit the same risk of diverging, while delivering comparable benefits. In fact, in the case with capillarity, these benefits are only present for the WA-HU TV scheme. More generally, the differences in efficiency are less important with capillary forces because the saturation front moving through the domain is more diffused. Overall, looking both at robustness and efficiency, the tilted box series of test cases motivate using the WA-HU TV scheme.

\begin{table}[htbp] 
\caption{Total number of Newton iterations performed for the tilted box test cases, only including gravity forces. The wasted Newton iterations due to time step cuts are included in the totals and also displayed between parentheses. The angle at which the box is tilted varies from $0^{\circ}$ (gravity segregation) to $90^{\circ}$ (lock-exchange).} \label{tab:resultsBoxNo}
\centering 
\begin{tabularx}{.9\textwidth}{l L R L R L R L R}

\toprule
{Box Tilt Angle}                 & \multicolumn{2}{ c }{\textbf{PPU}}  & \multicolumn{2}{ c }{ \textbf{(PPU-)HU}} & \multicolumn{2}{ c }{ \textbf{WA-HU TV}} & \multicolumn{2}{ c }{ \textbf{WA-HU TM}} \\
\midrule

\textbf{$0^{\circ}$} & 158&(0)               &  159&(0)                 &  157&(0)            & 154&(0)                   \\
\textbf{$20^{\circ}$} &  255&(0)               & 218&(0)                 & 212&(0)                & 207&(0)                 \\
\textbf{$45^{\circ}$} &  281&(0)               & 241&(0)                 & 230&(0)                & 221&(0)                 \\
\textbf{$70^{\circ}$} &  272&(0)               & 254&(15)               & 227&(0)            &704&(480)                      \\
\textbf{$90^{\circ}$} &  273&(15)               & 247&(15)               & 227&(0)            &227&(0)                      \\
\bottomrule
\end{tabularx}
\end{table}

\begin{table}[htbp] 
\caption{Total number of Newton iterations performed for the tilted box test cases with gravity and capillary forces. The wasted Newton iterations due to time step cuts are included in the totals and also displayed between parentheses. The angle at which the box is tilted varies from $0^{\circ}$ (gravity segregation) to $90^{\circ}$ (lock-exchange).} \label{tab:resultsBoxwithCap}
\centering 
\begin{tabularx}{.9\textwidth}{l L R L R L R L R}

\toprule
{Box Tilt Angle}                 & \multicolumn{2}{ c }{\textbf{PPU}}  & \multicolumn{2}{ c }{ \textbf{(PPU-)HU}} & \multicolumn{2}{ c }{ \textbf{WA-HU TV}} & \multicolumn{2}{ c }{ \textbf{WA-HU TM}} \\
\midrule

\textbf{$0^{\circ}$} & 165&(0)               &  163&(0)                 &  160&(0)            & 161&(0)                   \\
\textbf{$20^{\circ}$} &  209&(0)               & 210&(0)                 & 207&(0)                & 209&(0)                 \\
\textbf{$45^{\circ}$} &  220&(0)               & 211&(0)                 & 210&(0)                & 215&(0)                 \\
\textbf{$70^{\circ}$} &  221&(0)               & 202&(0)               & 199&(0)            &215&(0)                      \\
\textbf{$90^{\circ}$} &  206&(0)               & 199&(0)               & 200&(0)            &221&(0)                      \\
\bottomrule
\end{tabularx}
\end{table}

\subsection{Immiscible Three-Phase Gravity Segregation through Barriers} \label{sec:barriers}
Next, we consider a three-phase gravity segregation problem where the fluids travel through a field of barriers. This test case is taken from \cite{Klemetsdal2020Schwarz} with a couple modifications. A vertical domain of $100$m$\times100$m is gridded by $1$m$\times1$m cells. Initially, the top 10\% of cells are filled by a heavy fluid ($\rho=1500$kg/m$^3$), the bottom 10\% are saturated with a light fluid ($\rho=500$kg/m$^3$), and the remaining cells contain the intermediate fluid ($\rho=1000$kg/m$^3$). We name the fluids water(heavy), gas(light) and oil(intermediate), respectively, so that the problem is conceptually more familiar. All phases have viscosity $\mu=0.001 [kg/(m.s)]$ and quadratic relative permeability curves. Flow through the homogeneous rock, $k = 1$mD and $\phi =0.3$, is impeded by several impermeable layers. The openings in the layers create a complicated combination of gravitational and viscous dynamics with high and low flow regimes. Two versions of the test case are assessed: one without and one with capillary pressure. In the presence of capillary pressure, water is the most wetting and gas the least wetting phase. The spline interpolation presented in \ref{app:Capillary} (multiplied by 0.4) is applied for both the water-oil and oil-gas relationships. Figure \ref{fig:BarriersSnaps} depicts the initial condition and two time steps of the simulation. 

\begin{figure} [htbp]
\centering
\begin{subfigure}[t]{0.28\textwidth}
\centering
\includegraphics[width=\linewidth]{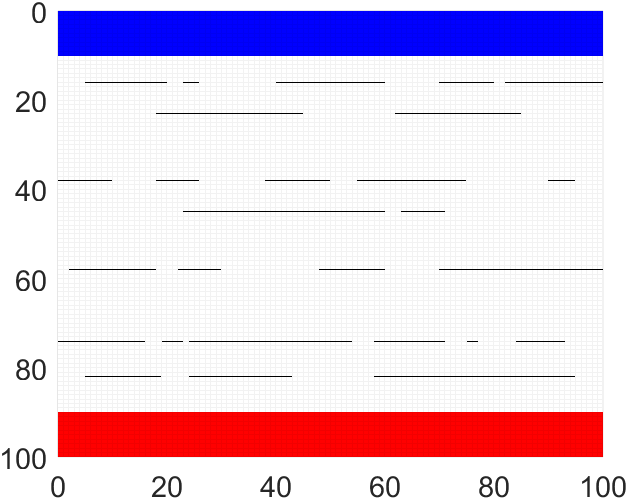}\hfill 
\caption{\label{fig:BarrierSnap1}Saturation map at 0 days}
\end{subfigure}
\hfill
\begin{subfigure}[t]{0.28\textwidth}
\centering
\includegraphics[width=\linewidth]{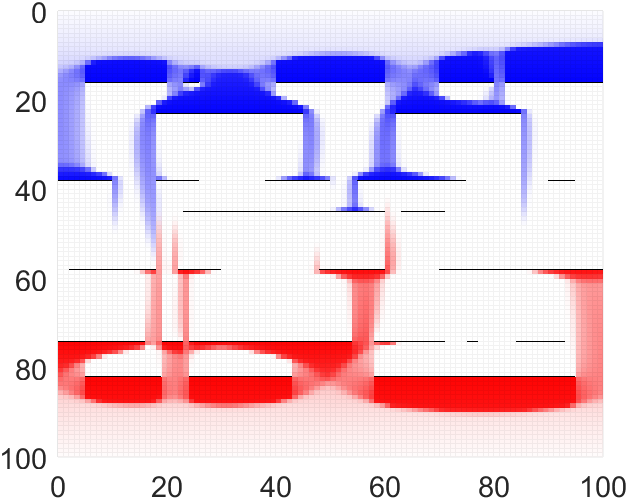}\hfill 
\caption{\label{fig:BarrierSnap2}Saturation map at 78 days}
\end{subfigure}
\hfill
\begin{subfigure}[t]{0.28\textwidth}
\centering
\includegraphics[width=\linewidth]{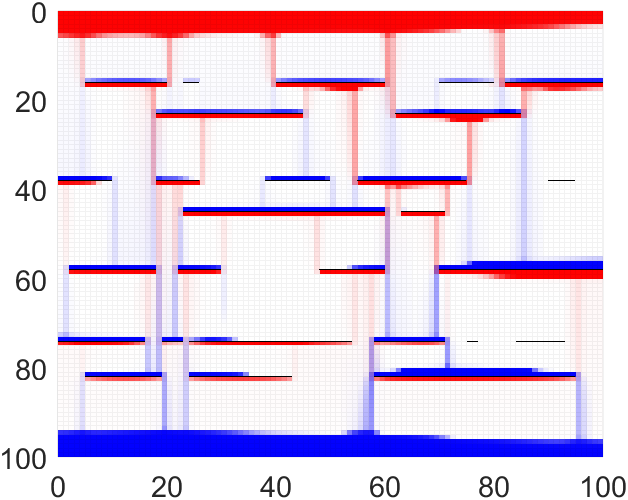}\hfill 
\caption{\label{fig:BarrierSnap3}Saturation map at 571 days}
\end{subfigure}
\hfill
\caption{\label{fig:BarriersSnaps} Snapshots of the saturation map for the simulation of gravity segregation through barriers without capillary forces. The light fluid (in red) is initially at the bottom and travels up while, simultaneously, the heavy fluid (in blue) is initially at the top and travels down. The intermediate fluid is indicated in white. }
\end{figure}

The cumulative Newton iterations are plotted in Figure \ref{fig:BarriersIterations}. We first discuss the results without capillary pressure in figure \ref{fig:BarriersIterations_nopc}. Although starting efficiently, the TM scheme runs into convergence issues, exceeding the maximum number of time step cuts at the 15$^{th}$ time step and stopping the simulation. The WA-HU TV scheme is the clear winner, requiring ~25\% and ~37\% less iterations than standard PPU and PPU-HU, respectively. We highlight that PPU-HU severely underperforms with respect to standard PPU in this test case. This is because PPU-HU can still suffer from the consequences of the same kinks as standard PPU, albeit to a lesser degree. Additionally it can also encounter issues associated to the kink at total velocity direction changes. This emphasizes the importance of smoothing the kinks at phase velocity direction changes, as WA-HU does, to fully take advantage of the benefits that fractional flow formulations and hybrid upwinding offer. 

The inclusion of capillary pressure smears the front and simplifies the problem for Newton's method in this test case. Nonetheless, Figure \ref{fig:BarriersIterations_withpc} still indicates a 25\% reduction in iterations from standard PPU to WA-HU TV. Furthermore, PPU-HU does not run into the same complications encountered without capillary pressure and here almost matches the most efficient performance. On the other hand, WA-HU TM still runs into severe convergence issues which stop the simulation. Overall, WA-HU TV again proves very robust and efficient.

\begin{figure}[htbp]
    \centering
    \begin{subfigure}[t]{0.49\textwidth}
    \centering
    \includegraphics[width=\linewidth]{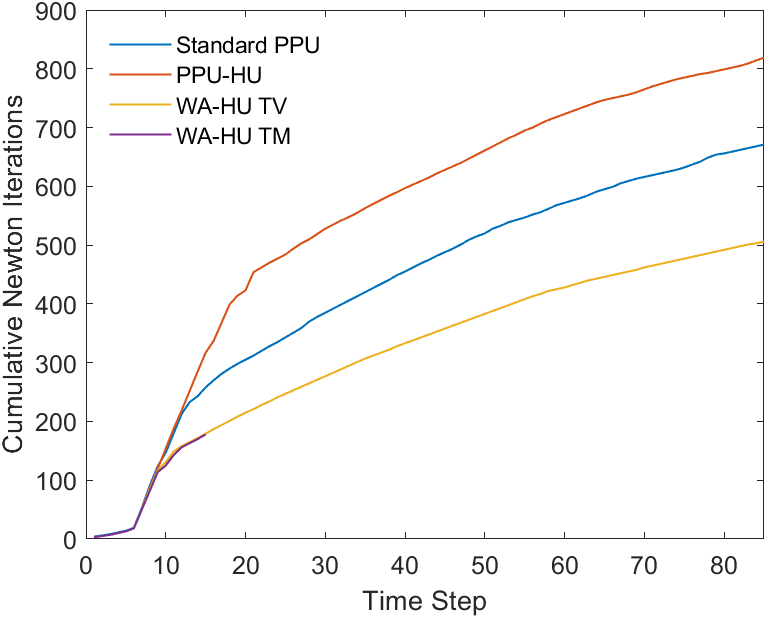}\hfill 
    \caption{\label{fig:BarriersIterations_nopc}}
    \end{subfigure}
    \hfill
    \begin{subfigure}[t]{0.49\textwidth}
    \centering
    \includegraphics[width=\linewidth]{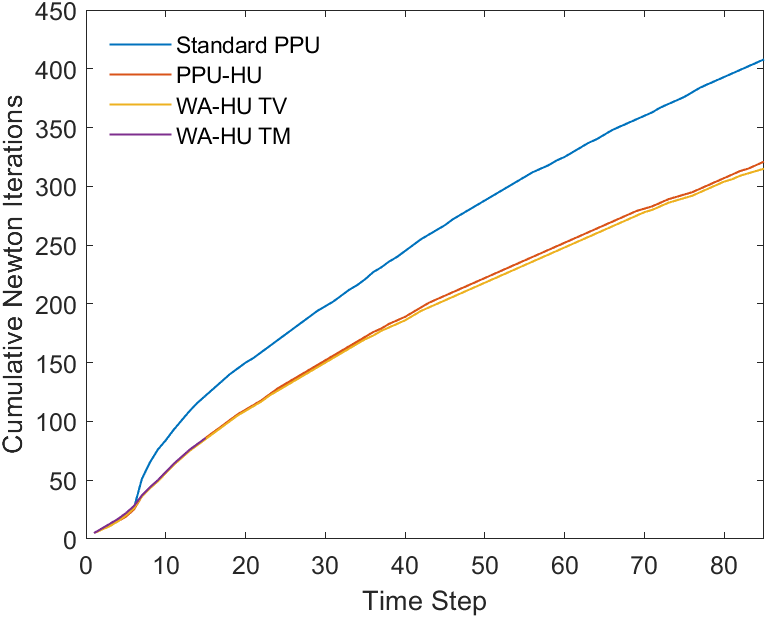}\hfill 
    \caption{\label{fig:BarriersIterations_withpc}}
    \end{subfigure}
    \hfill
    \centering
    \caption{Cumulative Newton iterations for the three-phase barriers test case (a) without capillary forces and (b) with capillary forces. Time step sizes are equivalent for all schemes. The cumulative counts include wasted iterations and intermediate time step iterations due to time step cuts. }
    \label{fig:BarriersIterations}
\end{figure}

\subsection{Miscible CO$_2$ Plume Migration} \label{sec:Plume}
The next test cases are run with more realistic fluids. We first consider the injection of CO$_2$ into a brine reservoir. Here, the rock is homogeneous with permeability in x- and z-direction equal to $k_x=100$mD and $k_z=1$mD, and porosity $\phi=0.3$. The reservoir is $100$m$\times10$m$\times10$m in size. We discretize the domain into $50\times1\times20$ cells for a coarse grid, and into $200\times1\times80$ cells for a fine grid. We take fluid properties for water($w$) and CO$_2$($g$) from CoolProp \cite{Bell2014Coolprop} via MRST. Relative permeability curves are set to $k_{r,w} = S_w^{2}$ and $k_{r,g} = S_g^2$. The top right cell is set to a fixed pressure of 50 atmospheric bars and we inject CO$_2$ at a constant rate in the bottom left such that 0.5 pore volumes are injected by the end of the simulation. We ramp up to $\Delta t =100$days in eight time steps and finish the simulation at 5 years. An illustration of the CO$_2$ plume is shown in figure \ref{fig:CO2Snap}.
\begin{figure}[htbp]
    \centering
    \includegraphics[width=.7\textwidth]{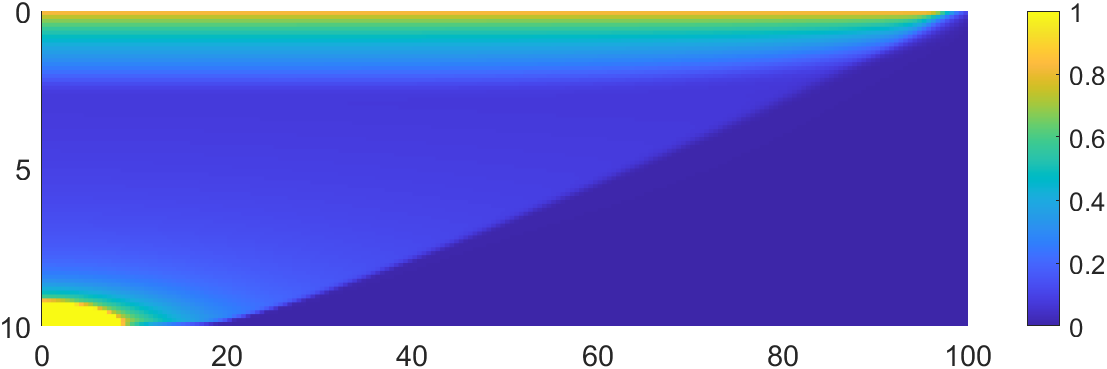}
    \caption{Overall CO$_2$ mass fraction on the fine grid.}
    \label{fig:CO2Snap}
\end{figure}

Although not a complicated test case, the results validate that the new proposed schemes perform adequately for less challenging problems. As figure \ref{fig:CO2Iterations_fine} specifies, the number of iterations does not differ greatly between the tested schemes for the coarse grid simulation.  Moreover, unlike in the other test cases, we note that the small differences in performance seen here are not significant enough to warrant a conclusion. When the grid is refined, Newton with a standard PPU scheme starts to struggle (see figure \ref{fig:CO2Iterations_coarse}). Most additional iterations occur towards the end of the simulation. This is most likely due to the gravity dynamics induced by heavier water with dissolved CO$_2$ at the top of the domain.
\begin{figure}[htbp]
    \centering
    \begin{subfigure}[t]{0.48\textwidth}
    \centering
    \includegraphics[width=\linewidth]{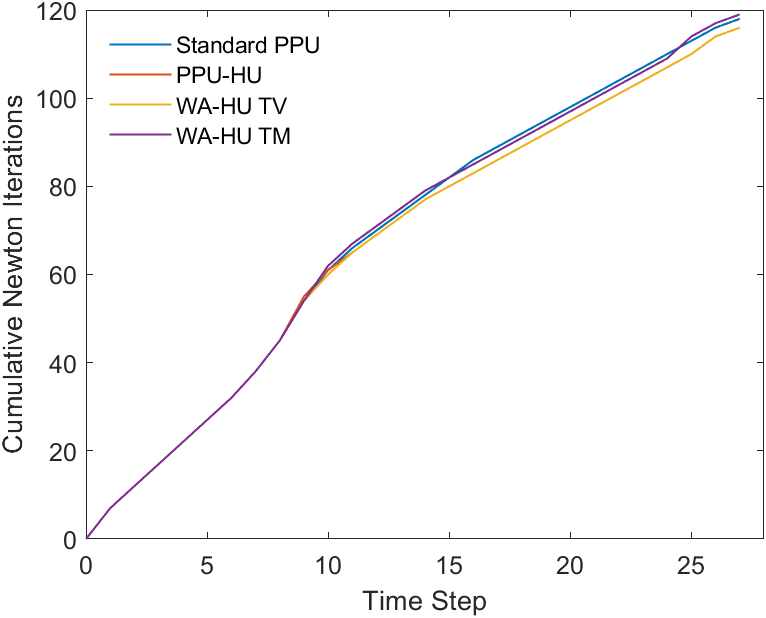}
    \caption{\label{fig:CO2Iterations_coarse}Coarse grid simulation.}
    \end{subfigure}
    \hfill
    \begin{subfigure}[t]{0.48\textwidth}
    \centering
    \includegraphics[width=\linewidth]{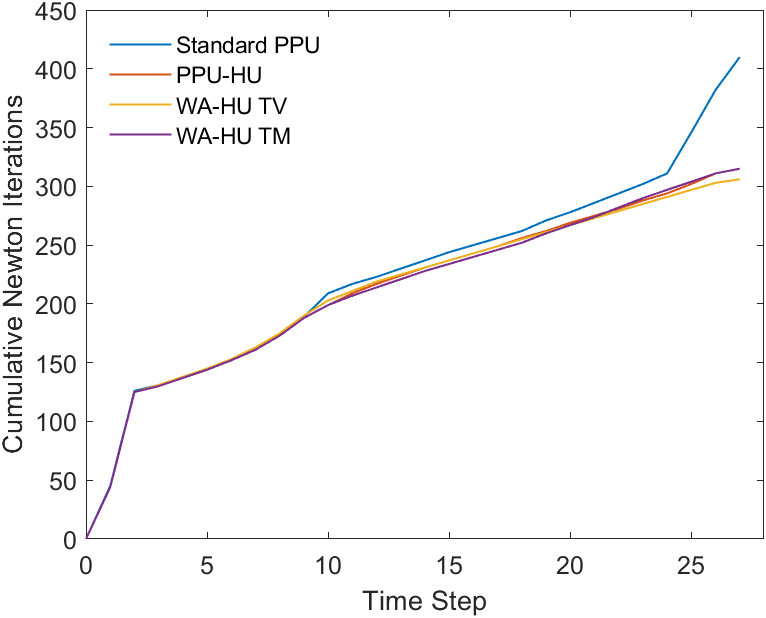}
    \caption{\label{fig:CO2Iterations_fine}Fine grid simulation.}
    \end{subfigure}
    \hfill
    \caption{Cumulative Newton iterations for the CO$_2$ plume test case. Time step sizes are equivalent for all schemes. The cumulative counts include wasted iterations and intermediate time step iterations due to time step cuts. Due to overlapping lines, we specify that PPU-HU requires the same number of iterations as WA-HU TV in the coarse grid simulation (a), and performs equivalently to WA-HU TM in the fine simulation (b).  }
    \label{fig:CO2Iterations}
\end{figure}

\subsection{Compositional Heterogeneous Flow through High Permeability Conduits} \label{sec:fractures}
This test case originates from \cite{Klemetsdal2020Schwarz} and \cite{Moyner2018Sequential} (see former for additional details). We alter it slightly to make it more challenging. A $100$m$\times50$m reservoir has five heterogeneous layers which are intersected by 13 conductive fracture channels. We consider three versions of the test case, a high contrast, a low contrast version and a low contrast version with capillary pressure. With high contrast the fractures are two orders of magnitude more conductive than the most conductive matrix cells. In the low contrast case, the fractures and most conductive matrix cells have permeabilities in the same order of magnitude. The Voronoi-cell grid and high contrast heterogeneous permeability field are illustrated in Figure \ref{fig:FracturesPerm}. The reservoir contains a two-phase, liquid-gas mixture consisting of n-decane, carbon dioxide and methane. The phase behavior is described by the cubic Peng-Robinson equation-of-state. A mixture of n-decane and carbon dioxide is injected in the bottom left of the domain at a fixed rate. For the case with capillary pressure, the liquid is considered wetting and we use the relationship from \ref{app:Capillary} scaled by a factor of $0.4$.  A bottom hole pressure well produces fluids at 50bar in the top right corner. The full list of the unaltered test case parameters can be found in the examples suite of the development version of MRST \cite{MRST}. 

\begin{figure}[htbp]
    \centering
    \includegraphics[width=.5\textwidth]{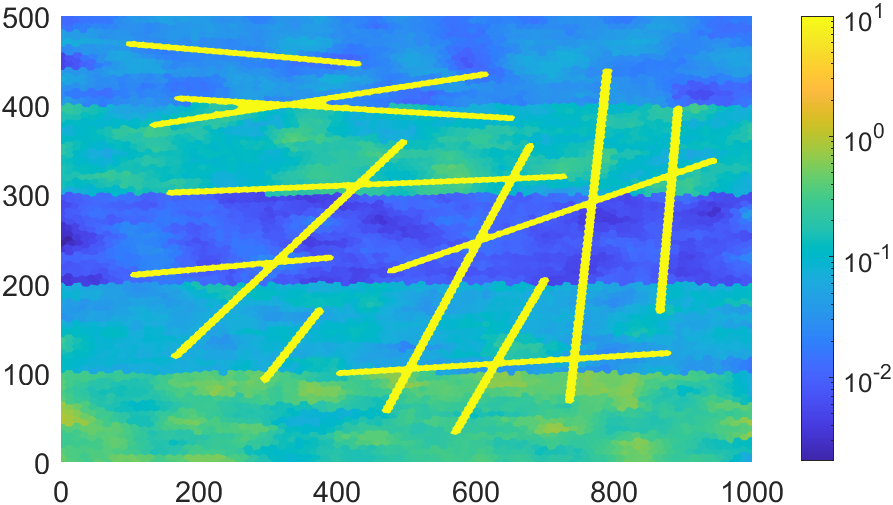}
    \caption{Permeability [Darcy] map of the high contrast fractures test case. }
    \label{fig:FracturesPerm}
\end{figure}

To make the case more challenging than the original, we inject at a faster rate, namely 0.3 pore volumes in 3 years. We also take larger time steps; ramping up to 20 (high contrast and low contrast with capillary pressure) and 25 days (low contrast) after eight initial smaller steps. The gas saturation and CO$_2$ mass fraction approximately halfway the simulation are presented in figure \ref{fig:FracturesSnaps}. We reiterate that these maps are close to identical for all schemes and that in the limit they converge to the same solution. As the test case becomes significantly harder than previous test cases, we apply Modified Appleyard saturation update damping with a maximum update of 0.2 (also see Section \ref{sec:Newton}). This also allows us to compare the new schemes in industry standard simulation settings.

\begin{figure} [htbp]
\centering
\begin{subfigure}[t]{0.48\textwidth}
\centering
\includegraphics[width=\linewidth]{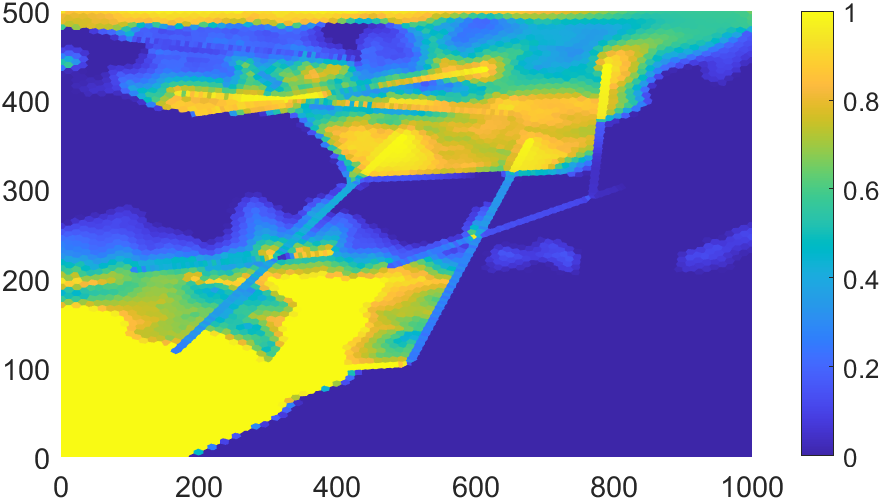}\hfill 
\caption{\label{fig:FracturesSgHighContrast}Gas saturation at 640 days.}
\end{subfigure}
\hfill
\begin{subfigure}[t]{0.48\textwidth}
\centering
\includegraphics[width=\linewidth]{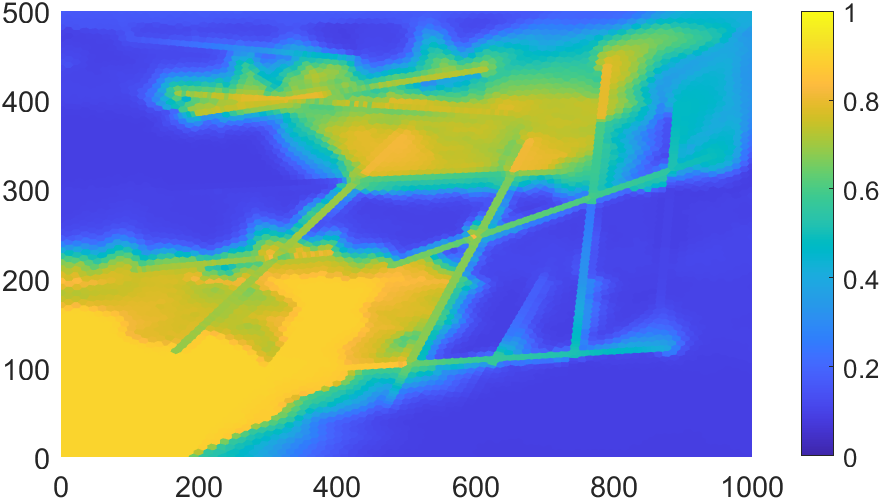}\hfill 
\caption{\label{fig:FracturesxCO2HighContrast}CO$_2$ mole fraction at 640 days}
\end{subfigure}
\hfill
\centering
\begin{subfigure}[t]{0.48\textwidth}
\centering
\includegraphics[width=\linewidth]{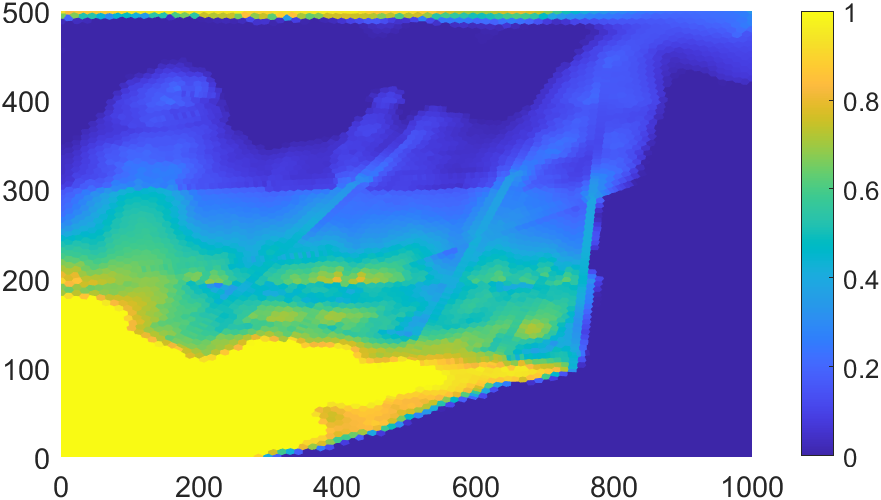}\hfill 
\caption{\label{fig:FracturesSg}Gas saturation at 600 days.}
\end{subfigure}
\hfill
\begin{subfigure}[t]{0.48\textwidth}
\centering
\includegraphics[width=\linewidth]{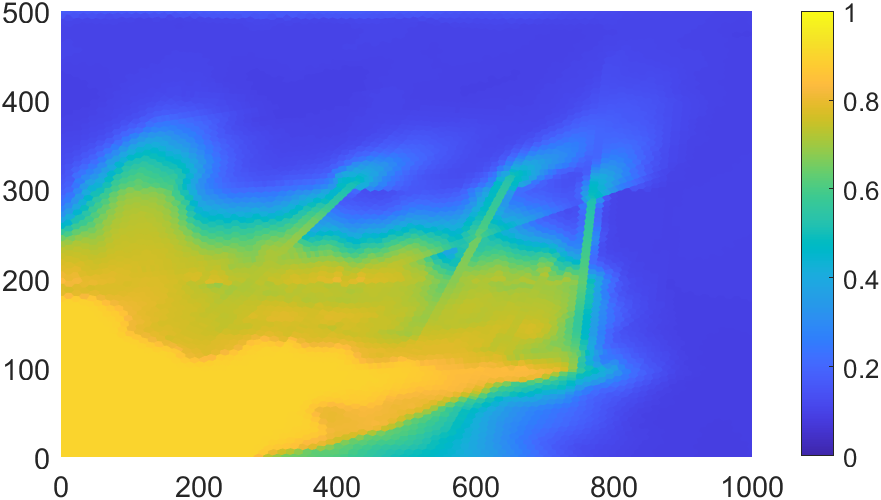}\hfill 
\caption{\label{fig:FracturesxCO2}CO$_2$ mole fraction at 600 days}
\end{subfigure}
\hfill
\caption{\label{fig:FracturesSnaps} Gas saturation and CO$_2$ mass fraction after 640 days for the fracture test cases without capillary pressure. Figures (a) and (b) are from the high contrast simulation, while (c) and (d) are from the low contrast simulation.}
\end{figure}

The cumulative iteration counts in figure \ref{fig:FracturesIterations} show that the WA-HU schemes outperform standard PPU and PPU-HU consistently. Looking at the low contrast case, figure \ref{fig:FracturesIterationsLow}, the new schemes require fewer iterations in time steps which are easier (e.g. 30 to 40). The difference is more significant for time steps with complicated dynamics, e.g. time steps 25 to 28, where the cumulative Newton iteration count increases significantly less for WA-HU because of fewer time step cuts. Overall, the new proposed schemes provide a reduction in nonlinear iterations of more than 35\% over standard PPU and more than 27\% over PPU-HU. In the high contrast case, Figure \ref{fig:FracturesIterationsHigh}, the difference in performance is more significant. Especially standard PPU suffers, leading to reductions in Newton iterations of 45\% and almost 60\% for the WA-HU TV and WA-HU TM schemes, respectively. Finally, in the case with capillary pressure we see that the results are more similar. This is in part due to the severe complexity of the problem which leads to a lot of time steps for all schemes. Still the new schemes significantly improve Newton performance, where again the WA-HU TM allows for the simulation to run with the least Newton iterations. It is interesting to point out that although the maximum MRST\cite{MRST} compositional CFL numbers are similar for all cases (approximately 150), the maximum saturation CFL number for the high contrast case is almost seven times higher (440 vs. $\sim 65$). This is in line with earlier works on HU, which show that the benefits increase as the saturation volume exchanges increase in the simulation. Summing up, this test case shows that WA-HU TM can increase efficiency even further for compositional simulations. Both new schemes again show strong performance across all variations. 

\begin{figure}[htbp]
    \centering
    \begin{subfigure}[t]{0.49\textwidth}
    \centering
    \includegraphics[width=\linewidth]{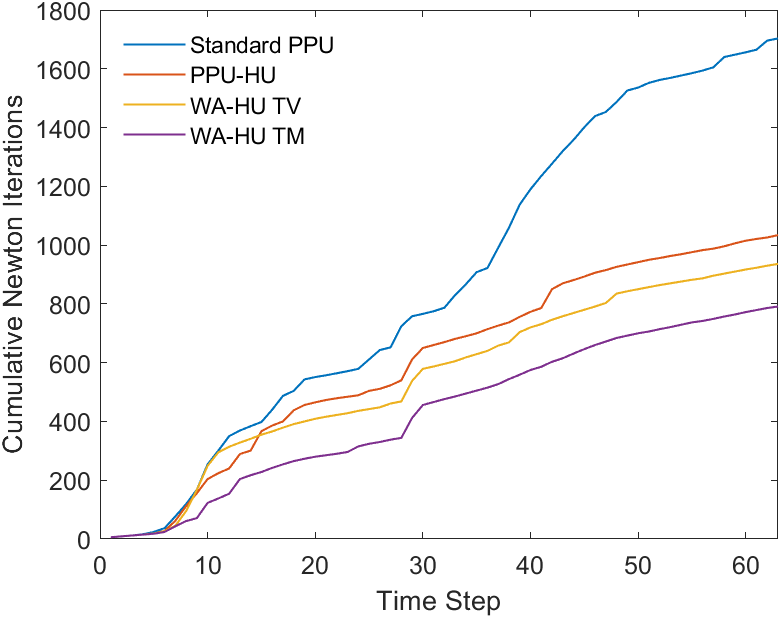}\hfill 
    \caption{\label{fig:FracturesIterationsHigh}High contrast case}
    \end{subfigure}
    \hfill
    \begin{subfigure}[t]{0.49\textwidth}
    \centering
    \includegraphics[width=\linewidth]{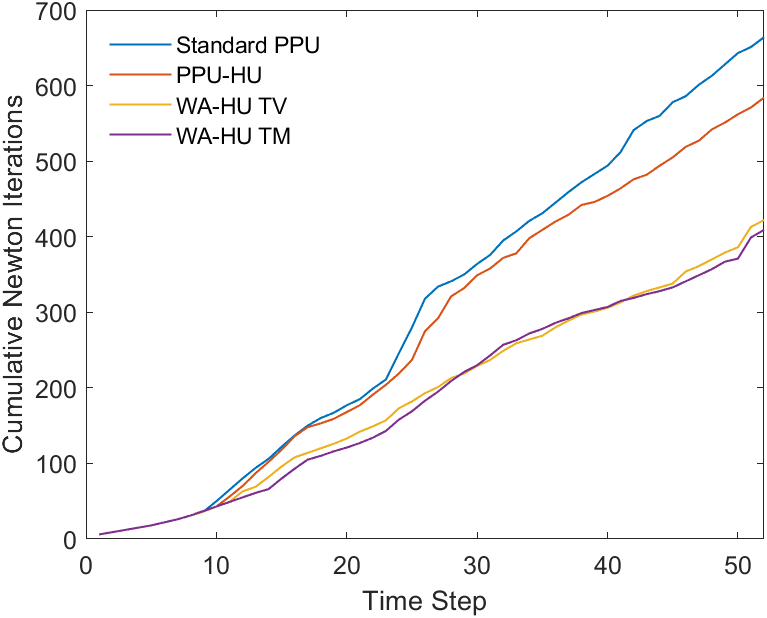}\hfill 
    \caption{\label{fig:FracturesIterationsLow}Low contrast case}
    \end{subfigure}
        \begin{subfigure}[t]{0.49\textwidth}
    \centering
    \includegraphics[width=\linewidth]{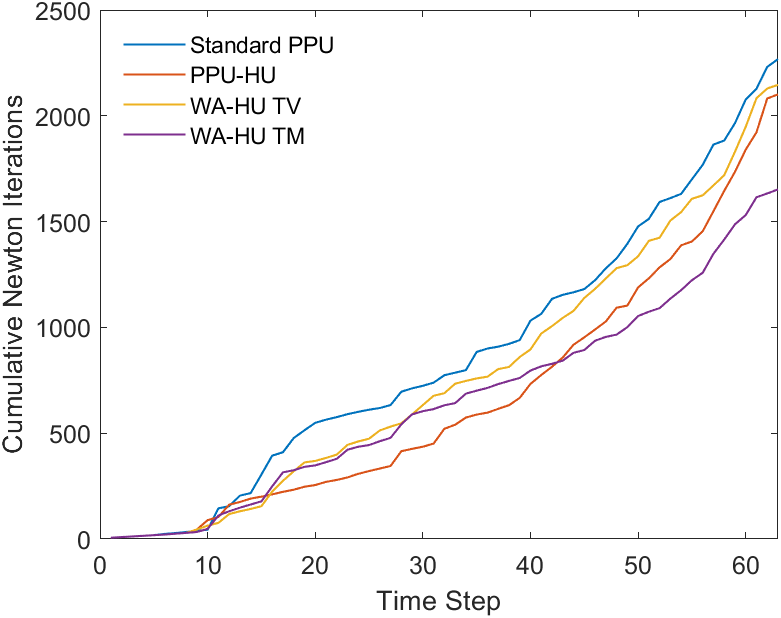}\hfill 
    \caption{\label{fig:FracturesIterationsLowPc}Low contrast case with capillary forces}
    \end{subfigure}
    \caption{Cumulative Newton iterations for the fracture test case with (a) high, (b) low permeability contrast, and (c) with capillary forces. Time step sizes are equivalent for all schemes. The cumulative counts include wasted iterations and intermediate time step iterations due to time step cuts.}
    \label{fig:FracturesIterations}
\end{figure}

\subsection{Water Alternating Gas Injection into SPE10 Layer} \label{sec:SPE10}
Finally, we consider a hydrocarbon production case using water-alternating-gas (WAG) injection. Practically, the alternation of the injection fluids upholds a favorable mobility ratio; numerically, it leads to a challenging simulation problem. The test case is a variation of the example presented in \cite{Moncorge2018SFIcompositional}. The first layer of the SPE10 Model 2 is initially filled with a six-component (C1, C3, C6, C10, C15, and C20) liquid-mixture. We rotate the reservoir and assume it is tilted at an angle such that gravity acts along the original x-axis (our y-axis) with an acceleration of $g=4.9$m/s$^2$. Figure \ref{fig:SPE10Perm} shows the rotated permeability field. A well in the southwest corner injects C1 gas for 1250 days, followed by immiscible water for 1250 days, followed by C1 gas for another 1250 days. A well in the northeast corner produces at a fixed bottom-hole pressure of 275 bar. Time steps of 75 days are taken after ramping up for the initial eight time steps of each injection phase. Fluid details of the three-phase system can be found in the development version of MRST \cite{MRST}. The final methane mole fraction is presented in Figure \ref{fig:SPE10CH4}.

\begin{figure} [htbp]
\centering
\begin{subfigure}[t]{0.48\textwidth}
\centering
\includegraphics[width=\linewidth]{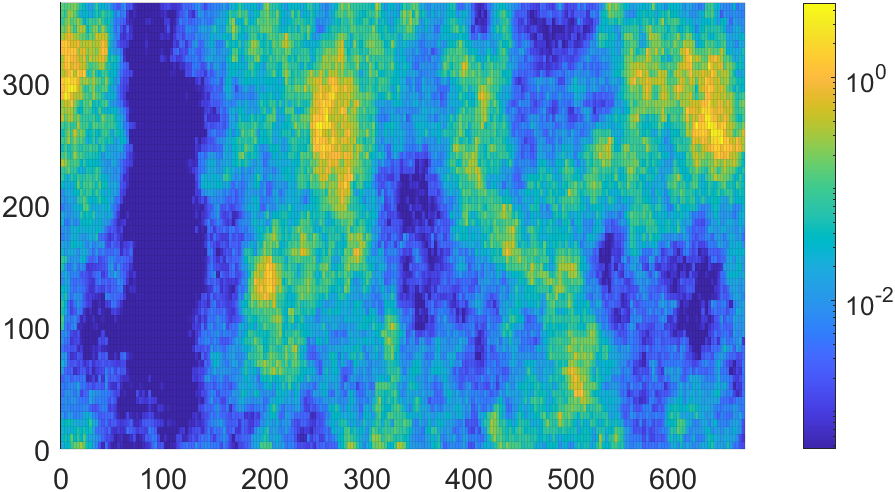}\hfill 
\caption{\label{fig:SPE10Perm}Permeability[Darcy]}
\end{subfigure}
\hfill
\begin{subfigure}[t]{0.48\textwidth}
\centering
\includegraphics[width=\linewidth]{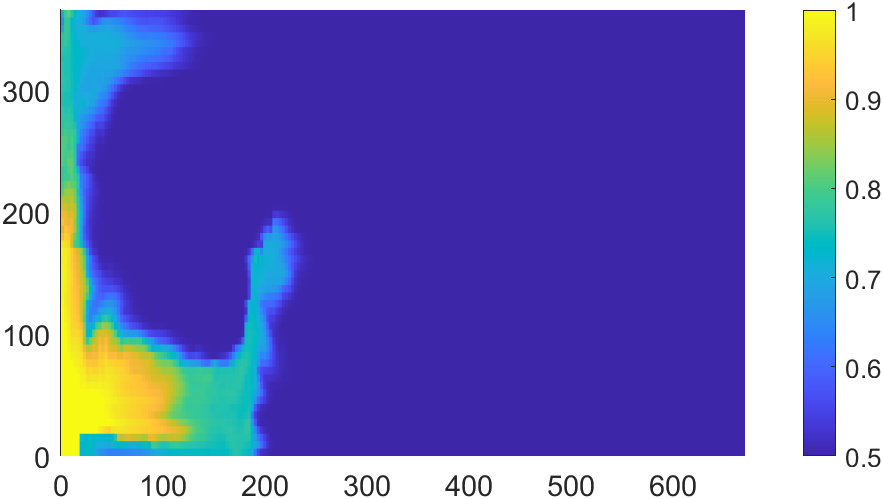}\hfill 
\caption{\label{fig:SPE10CH4}CH$_4$ mole fraction at end of simulation}
\end{subfigure}
\hfill
\centering
\caption{\label{fig:SPE10Snaps} SPE10 WAG test case permeability field and methane mole fraction map. Note that the test fluids are initialized at a methane mole fraction of 0.5.}
\end{figure}

The cumulative Newton iteration counts are plotted in Figure \ref{fig:SPE10Iterations}. Although through the ramp up time steps (0-8, 25-33 and 50-58) the schemes perform similarly, the difference between standard PPU and the three HU formulations is pronounced for all large time steps. Overall, this leads to a 21-26\% decrease in nonlinear iterations. Between PPU-HU, WA HU TV and WA HU TM the differences are not large enough to warrant conclusions. We again confirm the excellent performance of the proposed schemes.

\begin{figure}[htbp]
    \centering
    \includegraphics[width=.5\textwidth]{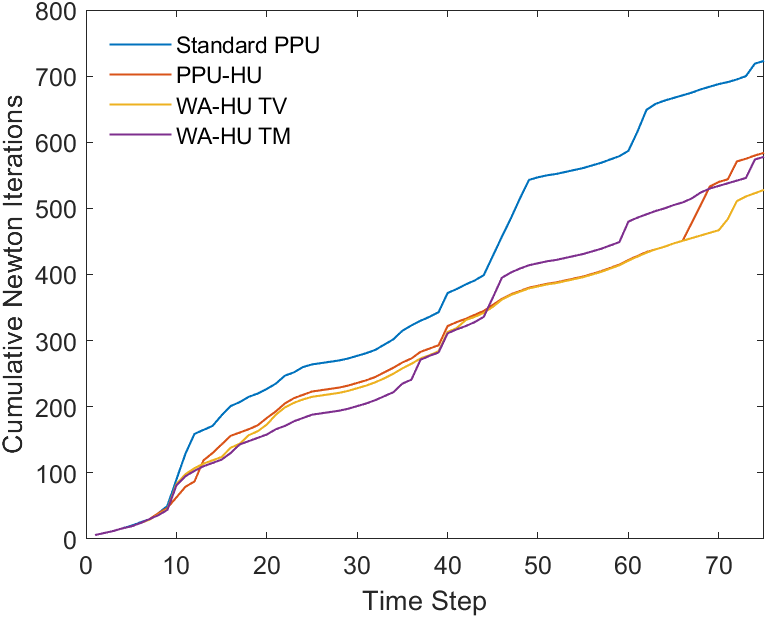}
    \caption{Cumulative Newton iterations for the SPE10 test case. Time step sizes are equivalent for all schemes. The cumulative counts include wasted iterations and intermediate time step iterations due to time step cuts. }
    \label{fig:SPE10Iterations}
\end{figure}

\section{Conclusion} \label{sec:conclusion}

Efficient simulations capable of taking large time steps are essential in managing hydrocarbon development and long-term geological carbon sequestration. To this end, we proposed two discretization schemes, WA-HU TV and WA-HU TM, which guide the iterations of the Newton-Raphson nonlinear solver more effectively, and thus allow for quicker convergence. By appropriately treating the fluid properties for each of the driving forces, we show that we can avoid the majority of numerics-induced kinks in the solution space. This is illustrated and emphasized through a thorough analysis of the residual contours and continuous Newton paths of a one-cell problem.

The schemes' performance vis-a-vis the industry standard method (PPU) and current state-of-the-art method (PPU-HU) is assessed on a suite of test cases. The problems range from simple 1D set-ups, to synthetic problems designed to challenge the nonlinear solver, to realistic compositional reservoir processes. The proposed total velocity hybrid upwinding scheme with weighted averaged properties (WA-HU TV) consistently outperforms the existing alternatives, yielding benefits from 5\% to over 50\% reduction in nonlinear iterations. The improved performance is most significant for challenging and realistic test cases. Overall, based on the current results, we recommend the adoption of the WA-HU TV scheme as it is highly efficient and robust.

The WA-HU scheme in a total mass flux formulation (WA-HU TM) also showed promising results; in some cases leading to even more efficiency. However, more work needs to be done on understanding the Newton behavior related to the total mass formulation and how to resolve the encountered issues. Several alternatives were explored in the process of this work. In short, the two most successful strategies chopped updates based on (i) the percentage of interfaces which had switched signs and (ii) on the number of flip-flops (traverses of the kink) an individual interface had experienced in a certain time step. Although severe time stepping issues can be avoided, these damping methods also lead to increased iterations in simulations where damping is not necessary. More generally, tuning these strategies is critical to their success. As such, a more robust, information-based and generally applicable chopping strategy to resolve convergence issues, such as with WA-HU TM, is still an open topic of interest.  

Summing up this work's contributions, we proposed (i) an improved WA treatment for flow mobilities in the fractional flow formulation, (ii) an extension of the HU framework to full-physics flow and transport simulations with compositional fluids, and (iii) a novel fractional flow formulation based on total mass flux. The schemes are extensively tested on a variety of cases and show significant nonlinear performance improvements across the board.


\subsection{Translating Nonlinear Performance to Run Time Savings}
In this work, we evaluate performance based on Newton iteration counts. Ultimately, we are of course interested in simulation run time. As a highly efficient implementation is a study in itself, it is not a focus of this work. Nevertheless, we note that our HU-based linear system construction (coded for research purposes, not optimized) costs $\sim$20\% more than standard system construction (open source optimized code). This is in line with the theoretical estimate that HU formulations should cost approximately 10\% more than standard constructions. We highlight that this additional cost is notably less than the savings in non-linear iterations. Additionally, as hardware becomes more powerful, the cost of linear system assembly is becoming increasingly less relevant compared to the rest of the iteration cost (such as linear system solves). As such, we plan to utilize WA-HU TV in our future work for general purpose simulation.

\section*{Acknowledgements} 
SB is supported by a named Stanford Graduate Fellowship in Science and Engineering (SGF). 
Funding for FH was provided by TotalEnergies through the FC-MAELSTROM project. FH contributed to this work during a visiting scientist appointment at LLNL.
The authors thank the SUPRI-B research group for valuable discussions and feedback. 
The authors thank {\O}ystein Klemetsdal for his help with MRST.

\appendix 

\section{Proof of Monotonicity For Incompressible Flow and Transport}
\label{app:Proof}
Almost all the properties proven in \cite{Hamon2016HU_Buoyancy} still apply to the new WA-HU scheme. To complete the required properties for the new discretization scheme, this appendix presents the proof of monotonicity of the total velocity with respect to pressure. As the requirement of monotonicity only applies to incompressible immiscible systems, those assumptions on the fluid properties are applied for the purpose of the proof.  

As the HU scheme with PPU flow mobilities is monotone, we can state that WA-HU is monotone if 
\begin{align} \label{eq:monotonicityConstraint}
    \frac{\partial u_{t}}{\partial \Delta p_{\ell,ij}} \geq 0.
\end{align}
Hence, to prove monotonicity, we prove a sufficient condition on $\gamma_{\ell,ij}$ for eq. \eqref{eq:monotonicityConstraint} to hold. From here on, we omit the face subscripts for notation simplicity.

\begin{lemma}
Considering $\beta_{\ell}$ as defined in eq. \eqref{eq:beta}, WA-HU satisfies eq. \eqref{eq:monotonicityConstraint} if
\begin{align}
    \gamma_\ell \geq 0.
\end{align}
\end{lemma}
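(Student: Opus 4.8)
The plan is to reduce the claim to a sign condition on a single scalar coefficient. Working under the incompressible, immiscible assumptions stated for the proof, the interface densities and hence the gravity potentials $g_\ell$ are pressure-independent, so $\partial \Delta\Phi_\ell / \partial \Delta p_\ell = 1$; combined with $T>0$, the monotonicity requirement \eqref{eq:monotonicityConstraint} is equivalent to $\partial u_t/\partial \Delta\Phi_\ell \geq 0$. Writing the total velocity from \eqref{eq:totalvelocity} as $u_t = T\sum_m \lambda_m^{WA}\Delta\Phi_m$ and recalling that the cell mobilities $\lambda_{m,i},\lambda_{m,j}$ depend on saturation only, I observe that the sole term sensitive to $\Delta\Phi_\ell$ is the $m=\ell$ term. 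A product-rule differentiation then reduces the task to showing that $\lambda_\ell^{WA} + \Delta\Phi_\ell\,\partial\lambda_\ell^{WA}/\partial\Delta\Phi_\ell \geq 0$.

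Next I would insert the explicit weighted-average form \eqref{eq:weightedAverageMobility}, using $\partial\lambda_\ell^{WA}/\partial\Delta\Phi_\ell = \beta_\ell'(\lambda_{\ell,i}-\lambda_{\ell,j})$ with $\beta_\ell' = \partial\beta_\ell/\partial\Delta\Phi_\ell$ from \eqref{eq:beta}, and substitute $\lambda_\ell^{WA} = \beta_\ell\lambda_{\ell,i}+(1-\beta_\ell)\lambda_{\ell,j}$. The bracket then rearranges into a linear combination of the two nonnegative cell mobilities,
\[
    \lambda_{\ell,i}\big(\beta_\ell + \beta_\ell'\Delta\Phi_\ell\big) + \lambda_{\ell,j}\big(1 - \beta_\ell - \beta_\ell'\Delta\Phi_\ell\big),
\]
so the entire expression is nonnegative as soon as the coefficient $h \equiv \beta_\ell + \beta_\ell'\Delta\Phi_\ell$ satisfies $0 \leq h \leq 1$. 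This is the heart of the argument.

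To control $h$, I would change variables to $t = \gamma_\ell \Delta\Phi_\ell/(|g_{ij,ref}|+|c_{\ell,ref}|)$, which is well defined because the denominator is strictly positive and which shares the sign of $\Delta\Phi_\ell$ precisely when $\gamma_\ell\geq 0$. A short computation collapses the coefficient into $h = \tfrac12 + \tfrac1\pi\psi(t)$ with $\psi(t) = \arctan t + t/(1+t^2)$. The key observation is that $\psi'(t) = 2/(1+t^2)^2 > 0$, so $\psi$ is strictly increasing with limits $\pm\pi/2$ as $t\to\pm\infty$; hence $\psi$ maps $\mathbb{R}$ into $(-\pi/2,\pi/2)$ and $h$ into $(0,1)$, which is exactly the nonnegativity of both coefficients. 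The hypothesis $\gamma_\ell\geq 0$ enters here by aligning the signs, so that $h\geq\tfrac12$ in the PPU upwind direction and $h<\tfrac12$ otherwise, matching the upwinding interpretation recorded before the lemma.

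The hard part will be the step bounding $h$: the base weights $\beta_\ell$ and $1-\beta_\ell$ of the convex combination lie in $[0,1]$ by construction, but the derivative contribution $\beta_\ell'\Delta\Phi_\ell$ perturbs them, so membership of $h$ in $[0,1]$ does not follow from convexity alone. The delicate point is to show this perturbation cannot push $h$ outside $[0,1]$, and it is the specific $\arctan$ smoothing in \eqref{eq:beta} that secures this through the monotonicity and bounded range of the auxiliary function $\psi$; a generic smoother would not guarantee $h\in(0,1)$ and could break monotonicity.
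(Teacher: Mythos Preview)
Your proof is correct and takes essentially the same route as the paper: both reduce the monotonicity of $u_t$ to the two-sided bound $0\leq \beta_\ell + \beta_\ell'\,\Delta\Phi_\ell \leq 1$ on the coefficient multiplying $\lambda_{\ell,i}-\lambda_{\ell,j}$, and then verify this by differentiating the $\arctan$-based expression and checking its limits. Your change of variables to $t$ makes the final step marginally cleaner---since $\psi'(t)=2/(1+t^2)^2>0$ holds unconditionally, your argument in fact establishes $h\in(0,1)$ for every $\gamma_\ell\in\mathbb{R}$, whereas the paper keeps the chain-rule factor $\Gamma_\ell$ in front of the derivative and therefore invokes $\gamma_\ell\geq 0$ explicitly to get $P$ nondecreasing.
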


\begin{proof}
Starting from eq. \eqref{eq:totalvelocity} with weighted average mobilities, we take the partial derivative with respect to the phase pressure difference:
\begin{align}
    \frac{\partial u_t}{\partial \Delta p_\ell} = T \sum_\ell \Bigg[ \frac{\partial \lambda^{WA}_\ell}{\partial \Delta p_\ell} \Delta \Phi_\ell + \lambda^{WA}_\ell \Bigg]
\end{align}
Filling in eq. \eqref{eq:weightedAverageMobility} and rewriting, we obtain

\begin{align}
    \frac{\partial u_t}{\partial \Delta p_\ell} = T \sum_\ell \Bigg[ \bigg(\beta_\ell + \frac{\partial \beta_\ell}{\partial \Delta p_\ell} \Phi_\ell \bigg) (\lambda_{\ell,i} - \lambda_{\ell,j}) + \lambda_{\ell,j} \Bigg].
\end{align}
If we can prove that each of the components of the sum satisfies eq. \eqref{eq:monotonicityConstraint}, then the total will as well. To this end, we can state that if
\begin{align} \label{eq:toprove}
    0 \leq \bigg(\beta_\ell + \frac{\partial \beta_\ell}{\partial \Delta p_\ell} \Phi_\ell \bigg) \leq 1,
\end{align}
then the scheme is monotone. 

We show that eq. \eqref{eq:toprove} holds in 3 steps: first we compute the 2 limits of the term, and then we evaluate if it is non-decreasing throughout. Substituting $\Gamma_\ell = \frac{\gamma_\ell}{|\boldsymbol{g}_{ref}| + |\boldsymbol{c}_{\ell,ref}|}$, we rewrite eq. \eqref{eq:beta} to be
\begin{align} \label{eq:beta2}
    \beta_{\ell} = 0.5 + \frac{1}{\pi} \arctan\big(\Gamma_\ell \Delta \Phi_{\ell}\big).
\end{align}
Of which we then find the partial derivative 
\begin{align} \label{eq:betaderivative}
    \frac{\partial \beta_{\ell}}{\partial \Delta p_\ell} = \frac{\Gamma_\ell}{\pi \big(1 +(\Gamma_\ell \Delta \Phi_{\ell})^2\big)}.
\end{align}
The term in eq. \eqref{eq:toprove} is a differentiable function of $\Delta p_\ell$, to which we assign the function variable $P$. Inserting eq. \eqref{eq:beta2} and eq. \eqref{eq:betaderivative}, we obtain
\begin{align} \label{eq:termP}
    P(\Delta p_\ell) = 0.5 + \frac{1}{\pi} \arctan\big(\Gamma_\ell \Delta \Phi_{\ell}\big) + \frac{\Gamma_\ell \Delta \Phi_\ell}{\pi \big(1 +(\Gamma_\ell \Delta \Phi_{\ell})^2\big)}.
\end{align}
We can then evaluate the bounds of the function and conclude:
\begin{align}
    \lim_{\Delta p_\ell \rightarrow -\infty} P(\Delta p_\ell) = 0 \quad \text{and} \quad \lim_{\Delta p_\ell \rightarrow \infty} P(\Delta p_\ell) = 1.
\end{align}
To determine if the term is non decreasing, we again take the derivative. This yields
\begin{align}
    \frac{\partial P}{\partial \Delta p_\ell} & = \frac{\Gamma_\ell}{\pi \big(1 +(\Gamma_\ell \Delta \Phi_{\ell})^2\big)} +
    \frac{\pi \Gamma_\ell \big(1 +(\Gamma_\ell \Delta \Phi_{\ell})^2\big) -
    2 \pi \Gamma_\ell^3 (\Delta \Phi_{\ell})^2}
    {\pi^2 \big(1 +(\Gamma_\ell \Delta \Phi_{\ell})\big)^2} \nonumber \\
    & = \frac{\Gamma_\ell}{\pi \big(1 +(\Gamma_\ell \Delta \Phi_{\ell})^2\big)}
    \bigg[ 1+ \frac{1-(\Gamma_\ell \Delta \Phi_{\ell})^2}{1+(\Gamma_\ell \Delta \Phi_{\ell})^2}\bigg].
\end{align}
Here, the second term and the denominator of the first term are strictly larger than 0. So for
\begin{align}
    \frac{\partial P}{\partial \Delta p_\ell} \geq 0
\end{align}
to be guaranteed, it suffices that $\Gamma_\ell \geq 0$, or more simply $\gamma_\ell \geq 0$. Hence, if $\gamma_\ell \geq 0$, then \eqref{eq:toprove} is satisfied, and thus \eqref{eq:monotonicityConstraint} holds.
\end{proof}

\section{Capillary Relationship Data and Good Practices}
\label{app:Capillary}
This appendix comments on good practices for efficient simulations with capillary input functions and presents the capillary data used in this work.

\subsection{Capillary functions for efficient simulation}
Many popular capillary relationships, such as the Brooks-Corey model, have an asymptote as the wetting effective saturation approaches zero. This asymptotic behavior can have significant consequences on the shape of the total velocity profile and residual space while having little impact on the solution. As such it also often leads to unnecessary additional Newton iterations or convergence issues. To illustrate the effect, figure \ref{fig:TotalVelocityInf} plots the total velocity profiles using the analytical capillary function related to the input data (see next subsection). Figures \ref{fig:TotalVelocityPPUInf} and \ref{fig:TotalVelocityWAnewInf} clearly show the steep asymptote which was not present when using spline interpolation in Figures \ref{fig:TotalVelocityPPU} and \ref{fig:TotalVelocityWAnew}. It is also evident that the rest of the domain remains close to identical in shape. The original WA scheme with capillary forces \cite{Hamon2018HU_capillarity} explicitly aimed to remove this asymptote through the discretization. Rather than treating capillary forces in the total velocity, it separately treats the forces in a fully smooth term. Although effective at smoothing, this results in additional numerical dispersion. The total velocity profile is depicted in figure \ref{fig:TotalVelocityWAoldInf}. To complete the analysis we also combine the capillary treatment of the original scheme with the new weighting in figure \ref{fig:TotalVelocityWAmixInf}. As is depicted in the plots, the asymptote can effectively be avoided at the expense of a slightly different total velocity profile. 

\begin{figure} [htbp]
\centering
\begin{subfigure}[t]{0.45\textwidth}
\centering
\includegraphics[width=\linewidth]{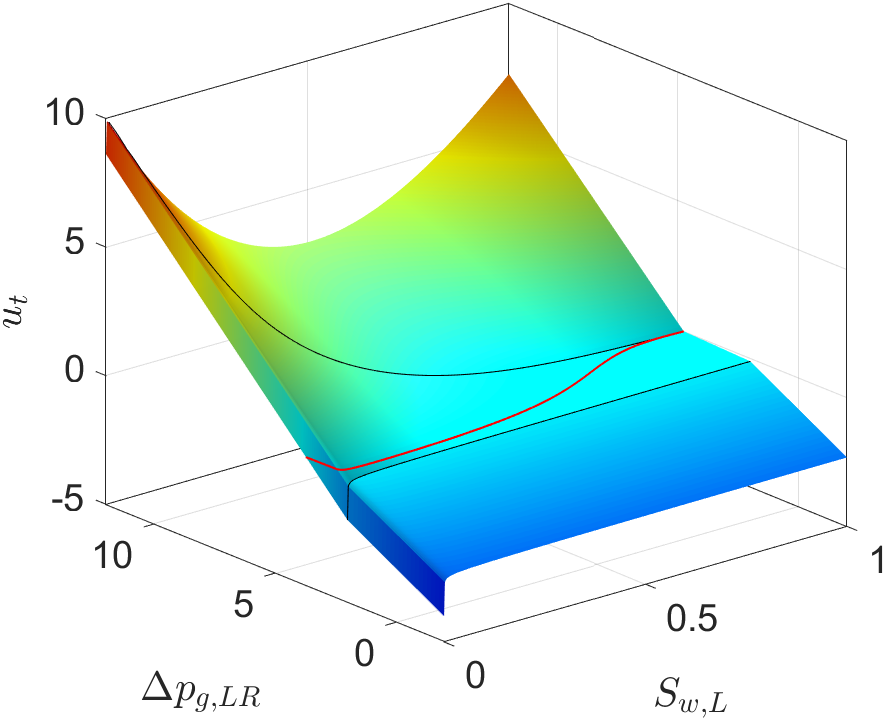}\hfill 
\caption{\label{fig:TotalVelocityPPUInf} PPU}
\end{subfigure}
\hfill
\begin{subfigure}[t]{0.45\textwidth}
\centering
\includegraphics[width=\linewidth]{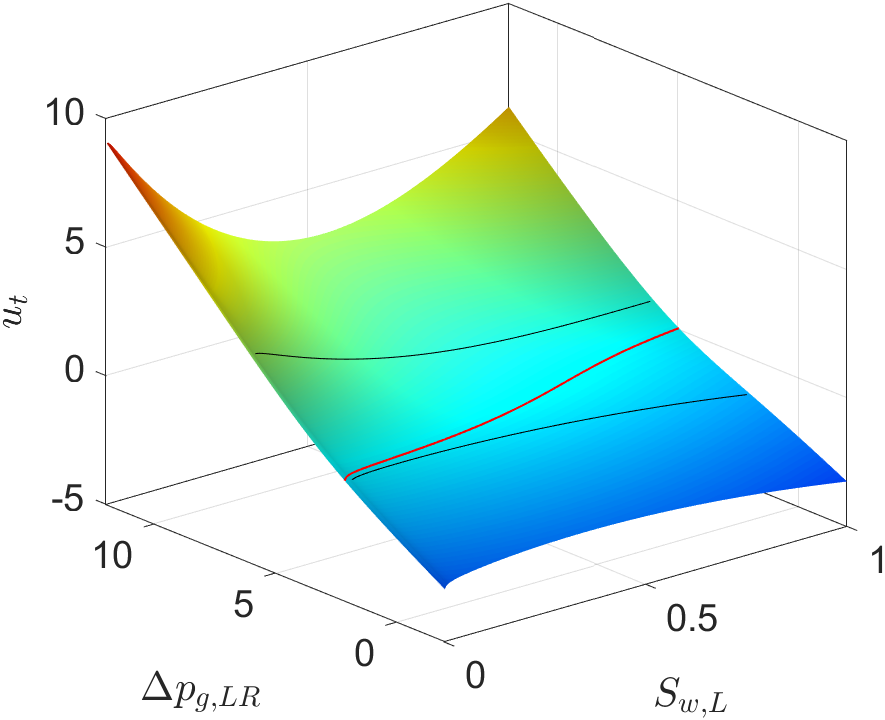}\hfill 
\caption{\label{fig:TotalVelocityWAoldInf}WA (Hamon et al. \cite{Hamon2018HU_capillarity})}
\end{subfigure}
\hfill
\begin{subfigure}[t]{0.45\textwidth}
\centering
\includegraphics[width=\linewidth]{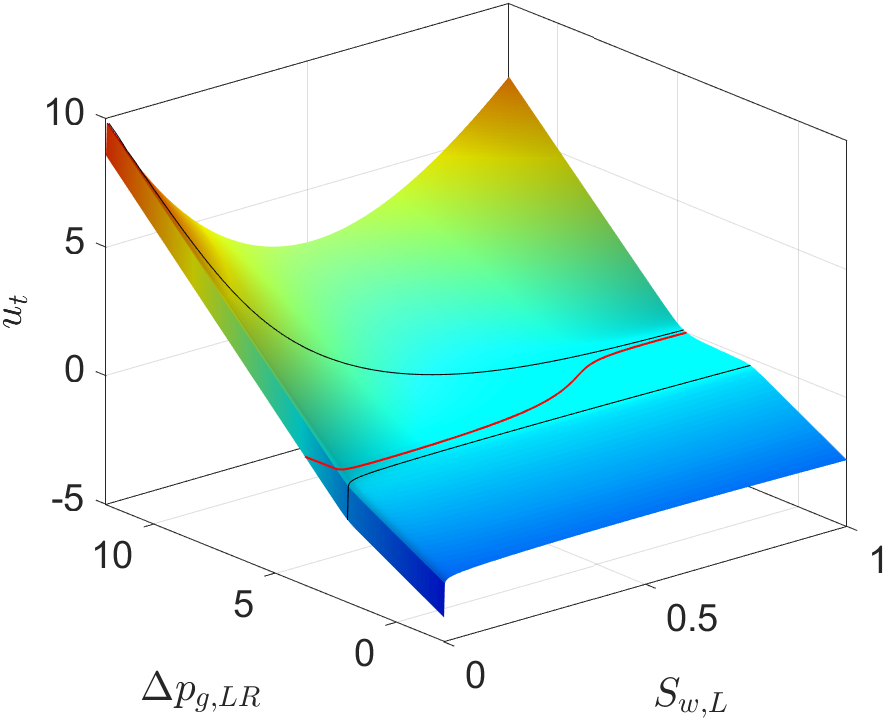}\hfill 
\caption{\label{fig:TotalVelocityWAnewInf}WA}
\end{subfigure}
\hfill
\begin{subfigure}[t]{0.45\textwidth}
\centering
\includegraphics[width=\linewidth]{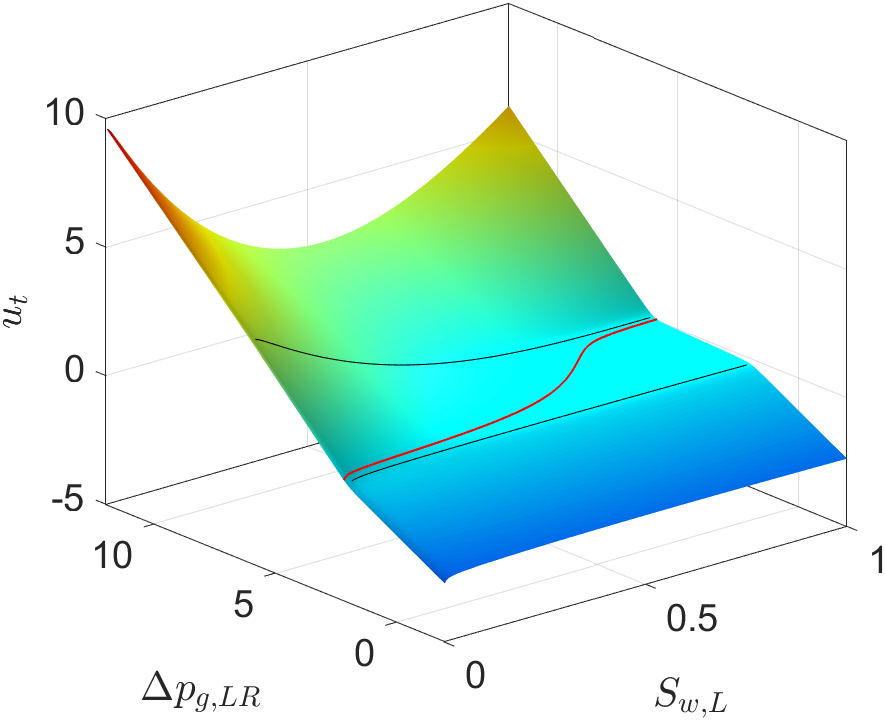}\hfill 
\caption{\label{fig:TotalVelocityWAmixInf}WA mix}
\end{subfigure}
\hfill
\caption{\label{fig:TotalVelocityInf} Total velocity profiles of the one-cell proxy test case for the PPU, WA \cite{Hamon2018HU_capillarity}, new WA scheme and a mix WA scheme. The latter uses the capillary treatment from \cite{Hamon2018HU_capillarity} and the weights from the new WA scheme. The left cell pressure and saturation are varied, while the right cell boundary conditions remain fixed. Phase potential flips and total velocity flips (i.e. changes in direction) are indicated with black and red lines, respectively. Note that in the original WA scheme and WA mix scheme, capillary pressures are excluded from the phase potential flips because they are treated separately. }
\end{figure}

Although this strategy is effective, it is essentially resolving an input issue. As such, in this work we opt for the discretization with the best properties, and avoid input functions with asymptotes. Moreover the new WA scheme is simpler in its formulation and implementation, and slightly less computationally costly per iteration. Recalling that the inclusion of the asymptote in the capillary pressure affects the numerical result very little, whereas nonlinear performance suffers greatly, we recommend using input functions for capillary pressure without asymptotes to infinity.

\subsubsection{Capillary input data}
In this work we use a capillary input data table. The data points are taken from the Brooks-Corey relationship with the following quantities:
\begin{align}
    P_{cap} = 5e4 \ \Bigg(\frac{1}{S_{wet}}\Bigg)^4,
\end{align}
where $S_{wet}$ is the wetting phase saturation and we ignore residual saturations. To avoid the asymptote present in the analytical function, we use a cubic spline interpolation on the table data. The table input is presented in Table \ref{tab:capData} and the interpolation in \ref{fig:capCurve}.

\begin{figure}[htbp]
    \centering
    \includegraphics[width=.5\textwidth]{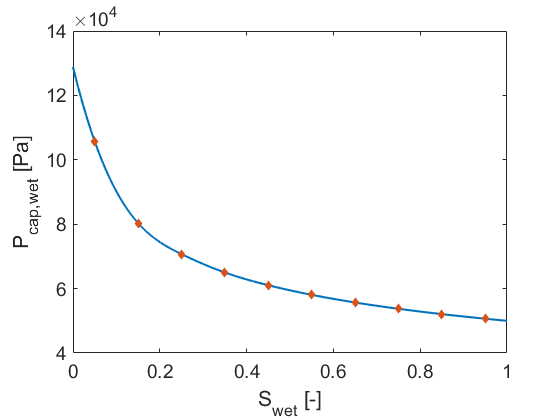}
    \caption{Capillary pressure curve with data samples and cubic spline interpolation. }
    \label{fig:capCurve}
\end{figure}

\begin{table}[htbp] 
\caption{Capillary curve data table} \label{tab:capData}
\centering 
\begin{tabularx}{.2\textwidth}{Y Y}

\toprule
{$S_{wet}$}                 & {$p_{cap,wet}$ } \\
\midrule
0.05        &       1.057e5 \\
0.15        &       8.034e4 \\
0.25        &       7.071e4 \\
0.35        &       6.500e4 \\
0.45        &       6.104e4 \\
0.55        &       5.806e4 \\
0.65        &       5.568e4 \\
0.75        &       5.372e4 \\
0.85        &       5.207e4 \\
0.95        &       5.064e4  \\
\bottomrule
\end{tabularx}
\end{table}

\bibliography{journalAbbreviationExtended,IHUWA} 


\end{document}